\newtheorem{theorem}{Theorem}
\newtheorem{example}[theorem]{Example}
\newtheorem{corollary}[theorem]{Corollary}
\newtheorem{lemma}[theorem]{Lemma}
\theoremstyle{definition}
\newtheorem{definition}[theorem]{Definition}
\crefname{claim}{claim}{claims}
\crefname{observation}{observation}{observations}
\newcommand{\szabo}{Szab{\'o}}
\newcommand{\SWC}{\szabo-Welzl condition\xspace}
\newcommand{\myProblem}[1]{\textsc{#1}}
\newcommand{\f}[1]{\relax\ifmmode#1\else{$#1$}\fi}
\newcommand{\USOSolTypeEndOfLine}{\emph{(U1)}\xspace}
\newcommand{\USOVioCondition}{\emph{(UV1)}\xspace}
\newcommand{\totalUSO}{\myProblem{Total-USO}\xspace}
\newacronym{USO}{\myProblem{USO-SF}}{\myProblem{Unique Sink Orientation Sink-Finding}}
\newcommand\USO{\gls*{USO}\xspace}
\newcommand{\dimension}{\relax\ifmmode{n}\else{$n$}\fi\xspace}
\newcommand{\Cube}{\relax\ifmmode{Q}\else{$Q$}\fi\xspace}
\newcommand{\orientation}{\relax\ifmmode{O}\else{$O$}\fi\xspace}
\newcommand{\nodeA}{\relax\ifmmode{v}\else{$v$}\fi\xspace}
\newcommand{\nodeB}{\relax\ifmmode{w}\else{$w$}\fi\xspace}
\newcommand\Vertices{\relax\ifmmode{V}\else{$V$}\fi\xspace}
\newcommand\Edges{\relax\ifmmode{E}\else{$E$}\fi\xspace}
\newcommand\Graph{\relax\ifmmode{G}\else{$G$}\fi\xspace}
\newacronym{PLCP}{\myProblem{P\nobreakdashes-LCP}}{\myProblem{P-Matrix Linear Complementarity Problem}}
\newcommand\PLCP{\gls*{PLCP}\xspace}
\newcommand{\totalPLCP}{\myProblem{Total-P-LCP}\xspace}
\newacronym{OMCP}{\myProblem{OMCP}}{\myProblem{Oriented Matroid Complementarity Problem}}
\newacronym{POMCP}{\myProblem{P\nobreakdashes-OMCP}}{\myProblem{P-Matroid Oriented Matroid Complementarity Problem}}
\newcommand{\OMCP}{\gls*{OMCP}\xspace}
\newcommand{\POMCP}{\gls*{POMCP}\xspace}
\newcommand{\totalPOMCP}{\myProblem{Total-P-OMCP}\xspace}
\newcommand{\Matroid}{\relax\ifmmode{{\mathcal{M}}}\else{${\mathcal{M}}$}\fi\xspace}
\newcommand{\GroundSet}{\relax\ifmmode{E}\else{$E$}\fi\xspace}
\newcommand{\Circuits}{\relax\ifmmode{\mathcal{C}}\else{$\mathcal{C}$}\fi\xspace}
\newcommand{\CircuitA}{\relax\ifmmode{X}\else{$X$}\fi\xspace}
\newcommand{\CircuitB}{\relax\ifmmode{Y}\else{$Y$}\fi\xspace}
\newcommand{\CircuitC}{\relax\ifmmode{Z}\else{$Z$}\fi\xspace}
\newcommand{\length}{\relax\ifmmode{n}\else{$n$}\fi\xspace}
\newcommand{\support}[1]{\relax\ifmmode{\underline{#1}}\else{$\underline{#1}$}\fi\xspace}
\newcommand{\edge}{\relax\ifmmode{e}\else{$e$}\fi\xspace}
\newcommand{\Basis}{\relax\ifmmode{B}\else{$B$}\fi\xspace}
\newcommand{\ExtendedMatroid}{\relax\ifmmode{\widehat{\mathcal{M}}}\else{$\widehat{\mathcal{M}}$}\fi\xspace}
\newcommand{\ExtendedGroundSet}{\f{\widehat{\GroundSet_{2\length}}}\xspace}
\newcommand{\ExtendedCircuits}{\f{\widehat{\mathcal{C}}}\xspace}
\newcommand{\Vector}{\relax\ifmmode{q}\else{$q$}\fi\xspace}
\newcommand{\SetS}{\relax\ifmmode{S}\else{$S$}\fi\xspace}
\newcommand{\sets}{\relax\ifmmode{s}\else{$s$}\fi\xspace}
\newcommand{\SetT}{\relax\ifmmode{T}\else{$T$}\fi\xspace}
\newcommand{\sett}{\relax\ifmmode{t}\else{$t$}\fi\xspace}
\newcommand{\fundamentalCircuit}{\f{C}\xspace}
\newcommand{\MatroidSolTypeEndOfLine}{\emph{(M1)}\xspace}
\newcommand{\MatroidVioPMatroid}{\emph{(MV1)}\xspace}
\newcommand{\MatroidVioPMatroidImplicit}{\emph{(MV3)}\xspace}
\newcommand{\MatroidVioNoBasis}{\emph{(MV2)}\xspace}
\newcommand{\formel}[1]{\relax\ifmmode#1\else{$#1$}\fi}
\newcommand{\Class}[1]{\textsf{#1}}
\newcommand{\TFNP}{\Class{TFNP}\xspace}
\newacronym{UEOPL}{\Class{UEOPL}}{\Class{Unique End of Potential Line}\xspace}
\newcommand\UEOPL{\gls*{UEOPL}\xspace}
\newcommand{\PUEOPL}{\Class{PromiseUEOPL}\xspace}
\newcommand\NP{\Class{NP}\xspace}
\newcommand\coNP{\Class{co-NP}\xspace}
\newcommand\FP{\Class{FP}\xspace}
\newcommand\PSPACE{\Class{PSPACE}\xspace}
\newcommand{\PM}{\left\{\begin{pmatrix}+\\ +\end{pmatrix}, \begin{pmatrix}-\\ -\end{pmatrix} \right\}}
\newcommand{\PME}{\left\{\begin{pmatrix}+\\ +\\ 0\end{pmatrix},\begin{pmatrix}-\\ -\\ 0\end{pmatrix},\begin{pmatrix} +\\ 0\\ -\end{pmatrix},\begin{pmatrix}-\\ 0\\ +\end{pmatrix},\begin{pmatrix} 0\\ +\\ +\end{pmatrix},\begin{pmatrix}0\\ -\\ -\end{pmatrix}\right\}}
\newcommand{\PMED}{\left\{\begin{pmatrix}+\\ +\\ 0\end{pmatrix},\begin{pmatrix}-\\ -\\ 0 \end{pmatrix},\begin{pmatrix} 0\\ 0\\ + \end{pmatrix},\begin{pmatrix} 0\\ 0\\ -\end{pmatrix}\right\}}
\tikzset{
	->-/.style={postaction={draw=black,very thick,postaction={decorate,decoration={
					markings,
					mark=at position .5 with {\arrow{>}}
			}},}},
	-<-/.style={postaction={decorate,decoration={
				markings,
				mark=at position .5 with {\arrow{<}}
	}}},
}
\newcommand{\hyperplane}[3]{
\coordinate (A#1) at #2;
\coordinate (B#1) at #3;
\node(D#1) at ($ (A#1) ! 0.6cm ! 90:(B#1) $) {};
\draw[thick, shorten >=-0.75cm, shorten <=-0.75cm] (A#1) -- (B#1);
\draw[->, gray] (A#1) -- (D#1) node [above] {#1};
}
\begin{document}

\title{On Degeneracy in the P-Matroid Oriented Matroid Complementarity Problem}

\author{Michaela~Borzechowski}
    \affil{Institut f\"ur Informatik, Freie Universit\"at Berlin\\ \texttt{michaela.borzechowski@fu-berlin.de}}
\author{Simon~Weber}
    \affil{Department of Computer Science\\ ETH Zürich\\ \texttt{simon.weber@inf.ethz.ch}}

\maketitle

\begin{abstract}
Klaus showed that the \OMCP can be solved by a reduction to the problem of sink-finding in a \emph{unique sink orientation (USO)} if the input is promised to be given by a \emph{non-degenerate} extension of a \emph{P-matroid}. In this paper, we investigate the effect of degeneracy on this reduction. On the one hand, this understanding of degeneracies allows us to prove a linear lower bound on the number of vertex evaluations required for sink-finding in \emph{P-matroid USOs}, the set of USOs obtainable through Klaus' reduction. 
On the other hand, it allows us to adjust Klaus' reduction to also work with degenerate instances.
Furthermore, we introduce a total search version of the \POMCP. Given \emph{any} extension of \emph{any} oriented matroid \Matroid, by reduction to a total search version of USO sink-finding we can either solve the \OMCP, or provide a polynomial-time verifiable certificate that \Matroid is \emph{not} a P-matroid.
This places the total search version of the \POMCP in the complexity class \UEOPL.

\end{abstract}

\section{Introduction}
Degenerate input can be an issue in structural analysis and algorithm design for many algebraic and geometric problems. 
It is often swept under the rug by assuming the input to be non-degenerate. For example, one often assumes all input points of a geometric problem to be in general position. In some problems (e.g., the minimum convex partition~\cite{grelier2022minimumconvexpartition}), such an assumption is inappropriate as it makes the problem considerably easier. 
In other cases, degenerate inputs can be solved easily by resolving degeneracy using \emph{perturbation} techniques.
In this paper, we investigate degeneracy in the context of the \acrlong*{POMCP}.

In this paper we view an oriented matroid as a collection \Circuits of sign vectors, i.e., vectors over $\{-, 0, +\}$, fulfilling the so-called \emph{circuit} axioms. These sign vectors are therefore called \emph{circuits}. We will thoroughly introduce the necessary definitions in \Cref{sec:OMPrelims}.
The \acrfull*{OMCP} is a search problem; given an oriented matroid, the task is to find a circuit in \Circuits that fulfills the so-called \emph{complementarity condition} and has no negative entries. Such a circuit must of course not always exist, but Todd \cite{todd1984matroids} showed that under the promise that the input is an extension of a so-called \emph{P-matroid}, there always exists a solution, and furthermore this solution is unique. Given this promise, the problem is then known as the \acrfull*{POMCP}.
The complexity of solving a \POMCP is widely open: There are no known polynomial-time algorithms or super-polynomial lower bounds, and the problem is not known to be complete for any promise search problem complexity class.

While no polynomial-time algorithms for the \POMCP are known, Klaus \cite{klaus2012phd} provided a reduction from \emph{non-degenerate} \POMCP to the problem of finding the sink in a \emph{unique sink orientation}. Unique sink orientations are a combinatorial framework that has been introduced by  Stickney and Watson \cite{stickney1978digraph} and \szabo{} and Welzl~\cite{szabo2001usos} for solving the algebraic \PLCP. Klaus' reduction is a direct adaption of the reduction from non-degenerate \PLCP to \USO by considering ideas due to Todd~\cite{todd1984matroids}, who originally introduced the \POMCP as another combinatorial abstraction of the \PLCP.

In the \USO problem, the input succinctly encodes an orientation of the $n$-dimensional hypercube graph. It is promised that this orientation is a \emph{unique sink orientation}, i.e., an orientation in which every face (or sub-cube) contains a unique sink. The goal is to find the global sink of the cube, which --- given the promise --- exists and is unique.

The promises of both the \POMCP and \USO problem are \coNP-hard to check~\cite{gaertner2015recognizing}. Given some oriented matroid extension, it is thus infeasible to check whether Klaus' reduction is applicable, or whether the input is degenerate or not even a P-matroid extension. We fix this by extending Klaus' reduction to work between \emph{total search problem} versions of the two problems. A total search problem is obtained from a promise problem by introducing alternative solutions, so-called \emph{violation solutions}, which serve as polynomial-time checkable certificates that the promise did not hold. Luckily, such certificates exist; it is known that checking whether an orientation is a USO is \emph{in} \coNP~\cite{gaertner2015recognizing}, and we show the same for checking that a given oriented matroid extends a P-matroid. For degeneracy we do not need to introduce any violations, since even for a degenerate P-matroid extension, the \POMCP will always have a solution.

These total search problem versions of \POMCP and \USO, \totalPOMCP and \totalUSO, are members of the class \TFNP, as introduced by Megiddo and Papadimitriou~\cite{megiddo1991tfnp}. Total search problems have recently gained much research interest, and \TFNP has been refined into a complex hierarchy of subclasses like \Class{PLS} \cite{Gelb}, \Class{PPP}, \Class{PPA}, \Class{PPAD}~\cite{PPAD} and \UEOPL~\cite{UEOPL2020}. \totalPLCP and \totalUSO are known to lie in the subclass \UEOPL, which is at the bottom end of this hierarchy, i.e., it is the easiest to solve subclass apart from the class \FP of polynomial-time solvable total search problems. Similarly, \PLCP and \USO are known to lie in its promise analogue, \PUEOPL. Klaus' original reduction thus implied that non-degenerate \POMCP also lies in \PUEOPL, while our extension shows \totalPOMCP to lie in \UEOPL. None of these problems are known to be complete for their respective classes, and finding a natural complete problem for either \UEOPL or \PUEOPL remains a major open problem.

On the way towards our reduction, we develop a thorough understanding of degeneracy in \OMCP{}s and its impact on Klaus' reduction and the resulting hypercube orientations. Using this understanding and existing perturbation techniques for oriented matroids, we also derive a better lower bound for \USO on the subclasses of USOs obtainable through the reductions of Klaus~\cite{klaus2012phd} and Stickney and Watson~\cite{stickney1978digraph} from the \POMCP and from the \PLCP, respectively. Hereby we improve on the logarithmic lower bound of Weber and Widmer~\cite{weber2022matousekgap} and get closer to the almost-quadratic lower bound of Schurr and \szabo{} on general USOs~\cite{schurr2004quadraticbound}.

\subsection{Related Work}

Oriented matroids are combinatorial abstractions of many types of configurations of geometric objects, such as (pseudo-)hyperplane arrangements or point configurations.
For a thorough overview over possible sources of oriented matroids, we guide the reader to the textbook by Björner et al. \cite{bjoerner1999orientedmatroids}. P-matroids are a natural combinatorial abstraction of P-matrices, and correspondingly, the \POMCP is a combinatorial abstraction of the algebraic \PLCP. We will put our structural and lower bounds results also into the context of the \PLCP.

The \PLCP is a very important problem, since it can be used to solve many optimization and search problems, such as linear programming and strictly convex quadratic programming~\cite{gaertner2006lpuso}, simple stochastic games~\cite{gaertner2005stochasticgames,ruest2007phd}, and variants of the $\alpha$-ham sandwich problem~\cite{borzechowski2024twoisenough}. It has a similarly unknown complexity status as both \POMCP and \USO, and finding a polynomial-time algorithm for the \PLCP is a major open question. While the \PLCP reduces to both the \POMCP and \USO, and \POMCP reduces to \USO, no reductions in the other direction are known, and thus no two of these problems are known to be equivalent.

\subsection{Paper Overview}
In \Cref{sec:preliminaries} we first introduce the necessary definitions of oriented matroids and the \POMCP and \USO problems.
We then revisit Klaus' reduction in \Cref{sec:ClassicReduction} and investigate the effect of degeneracy on this reduction in \Cref{sec:structural}. In \Cref{sec:constructions} we show how degeneracies can be fixed by perturbations both in the oriented matroid as well as the USO setting. We then prove our lower bounds for sink-finding in \POMCP- and \PLCP-induced USOs (\Cref{thm:POMCPlower,thm:PLCPlower}).
In \Cref{sec:searchcomplexity} we finally show how to formulate a total search problem version of \POMCP, and adapt Klaus' reduction to the total search setting.

\section{Preliminaries}\label{sec:preliminaries}
\subsection{Oriented Matroids}\label{sec:OMPrelims}

We only introduce the definitions and notations required for our results and proofs. For a more extensive introduction to oriented matroids, we refer the reader to the comprehensive textbook by Björner et al.~\cite{bjoerner1999orientedmatroids}. We consider oriented matroids $\Matroid=(E,\Circuits)$ in \emph{circuit representation}, where $E$ is called the \emph{ground set}, and $\Circuits$ is the collection of circuits of $\Matroid$. Each circuit is a \emph{signed set} on $E$. A signed set $X$ can either be represented as a vector $\CircuitA\in\{-,0,+\}^{E}$, or as a tuple of disjoint subsets of $E$, i.e., $(\CircuitA^+, \CircuitA^-)$. From the vector representation the tuple representation can be recovered by setting $X^+:=\{e\in E\;\vert\; X_e=+\}$ and $X^-:=\{e\in E\;\vert\; X_e=-\}$, and vice versa.
We write $-\CircuitA$ for the inverse signed set $-\CircuitA =  (\CircuitA^-, \CircuitA^+)$.
The \emph{support} of a signed set $X$ is defined as the set of non-zero elements $\support{\CircuitA} \coloneqq \CircuitA^+ \cup \CircuitA^-$.

\begin{definition}[Circuit axioms]
A collection \Circuits of signed sets on a ground set \GroundSet is the collection of circuits of an oriented matroid $\Matroid = (\GroundSet, \Circuits)$ if the following set of axioms are satisfied for $\Circuits$.
\begin{itemize}[leftmargin=1.2cm]
\item[(C0)] $(\emptyset, \emptyset) \notin \Circuits$.
\item[(C1)] $X\in \Circuits \Leftrightarrow -X\in\Circuits$.
\item[(C2)] For all $\CircuitA, \CircuitB \in \Circuits$, if $\support{\CircuitA}\subseteq \support{\CircuitB}$, then $\CircuitA=\CircuitB$ or $\CircuitA=-\CircuitB$.
\item[(C3)] For all $\CircuitA, \CircuitB \in \Circuits$, $\CircuitA\neq-\CircuitB$, and $\edge \in \CircuitA^+ \cap \CircuitB^-$ there is a signed set $\CircuitC \in \Circuits$ such that
\begin{itemize}[label=$\bullet$]
\item $\CircuitC^+ \subseteq (\CircuitA^+ \cup \CircuitB^+) \setminus \{\edge\}$ and 
\item $\CircuitC^- \subseteq (\CircuitA^- \cup \CircuitB^-) \setminus \{\edge\}$.
\end{itemize}
\end{itemize}
\end{definition}

We say that a set $S\subseteq \GroundSet$ \emph{contains} a signed set $X$ if $\support{X}\subseteq S$. A \emph{basis} $\Basis \subseteq E$ of an oriented matroid $\Matroid=(\GroundSet, \Circuits)$ is an inclusion-maximal subset of \GroundSet such that \Basis contains no circuit. It is well-known that all bases of an oriented matroid \Matroid have the same size, and the \emph{rank} of \Matroid is this size. An oriented matroid is called \emph{uniform} if all subsets of $E$ with cardinality equal to the rank of the oriented matroid are bases.

The \emph{cocircuits} $\Circuits^*$ of an oriented matroid $\Matroid$ are the circuits of the \emph{dual} oriented matroid~$\Matroid^*$. To understand duality, we use the following notion of orthogonality.
\begin{definition}
    Two signed sets $X,Y$ are said to be \emph{orthogonal} if  $\support{X}\cap \support{Y}=\emptyset$, or there exist $e,f\in\support{X}\cap\support{Y}$, such that $X_eY_e=-X_fY_f$.
\end{definition}
In other words, two signed sets are orthogonal if their supports either do not intersect at all, or if they agree (same non-zero sign) and disagree (opposite non-zero sign) on at least one element.
\begin{lemma}[\cite{bjoerner1999orientedmatroids}]\label[lemma]{lem:orthogonal}
    Let $X\in \Circuits$ be a circuit and $Y\in\Circuits^*$ be a cocircuit of some oriented matroid~$\Matroid$. Then, $X$ and $Y$ are orthogonal.
\end{lemma}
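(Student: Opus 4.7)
My plan is a proof by contradiction. If $\support{X}\cap\support{Y}=\emptyset$ then orthogonality holds by definition, so assume the intersection is nonempty. After possibly replacing $Y$ by $-Y$, which remains a cocircuit by the dual of axiom (C1), I may assume that $X_e Y_e=+$ for some $e\in\support{X}\cap\support{Y}$. It then suffices to exhibit some $f\in\support{X}\cap\support{Y}$ with $X_f Y_f=-$.

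The first step is to invoke the classical underlying-matroid fact that a circuit and a cocircuit cannot meet in exactly one element, so $|\support{X}\cap\support{Y}|\ge 2$ and at least a candidate $f\neq e$ exists. From here, the cleanest route I know is to pass to the chirotope (basis orientation) encoding of \Matroid. Both circuits and cocircuits can be read off the chirotope via a standard sign rule: a basis $B$ together with an element $e\notin B$ yields a signed fundamental circuit, and dually a cobasis together with an element yields a signed cocircuit. Choosing two bases $B_1,B_2$ whose symmetric difference lies in $\support{X}\cup\support{Y}$ and captures both $e$ and $f$, the three-term Grassmann--Pl\"ucker relation among the chirotope values on $B_1$, $B_2$ and one further adjacent basis translates, when the relevant terms are nonzero, into the sign identity $X_e Y_e = -X_f Y_f$ for a suitably chosen $f$, which is exactly what is needed.

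The main obstacle is the interface between the circuit axioms (C0)--(C3), which only speak about pairs of circuits, and cocircuits, which live in the dual oriented matroid $\Matroid^*$. A purely circuit-theoretic proof requires first bootstrapping a usable characterization of $\Circuits^*$ from (C0)--(C3) and then iterating circuit elimination (C3) carefully against auxiliary circuits supported on the hyperplane complementary to $\support{Y}$, using minimality (C2) to close the argument. Going through the chirotope picture sidesteps this bookkeeping but requires the nontrivial equivalence of the chirotope and circuit axiomatizations of oriented matroids; in a full write-up I would invoke this equivalence as a black box from \cite{bjoerner1999orientedmatroids} rather than reprove it.
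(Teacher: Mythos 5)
The paper does not actually prove this lemma; it is cited verbatim from Björner et al., and the sentence following it in the paper adopts the standard shortcut: cocircuits \emph{are} the inclusion-minimal nonempty signed sets orthogonal to every circuit. With that characterization of $\Circuits^*$, the lemma is essentially definitional, and the real work (omitted by the paper as well) lives in the duality theorem asserting that such a family of signed sets is itself the set of circuits of an oriented matroid.

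Your chirotope/Grassmann--Plücker route is a genuinely different and in-principle viable path, but as written it has concrete gaps. The central sentence --- choose $B_1,B_2$ whose symmetric difference ``captures both $e$ and $f$'', read the sign identity $X_eY_e=-X_fY_f$ off a three-term GP relation ``when the relevant terms are nonzero'' for ``a suitably chosen $f$'' --- is a hope, not an argument. In particular, once $|\support{X}\cap\support{Y}|\geq 3$ there is in general no single basis $B$ that makes $X$ a fundamental circuit and $Y$ a fundamental cocircuit simultaneously (one would need $\support{X}\setminus\{e\}\subseteq B$ and $\support{Y}\cap B=\{f\}$, which is overdetermined), so you cannot just read both sign vectors off one chirotope expansion; you would need to pivot between bases and track signs along the way, and the ``nonzero terms'' caveat conceals a case analysis that is exactly where the difficulty lies. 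You also rely on the chirotope--circuit equivalence as a black box, which is legitimate to cite, but one must check that the particular form of the equivalence invoked does not itself pass through circuit--cocircuit orthogonality (in Björner et al.\ the equivalence and duality are developed together, via the dual-pair axioms, so the dependency is delicate). You are candid about these obstacles, which is good, but the write-up as given does not constitute a proof. A cleaner self-contained route is the one the paper implicitly endorses: establish the unsigned fact $|\support{X}\cap\support{Y}|\neq 1$, take $\Circuits^*$ to be the minimal nonempty signed sets orthogonal to all of $\Circuits$, verify that this family satisfies (C0)--(C3), and observe that orthogonality then holds by construction.
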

 Given the set of circuits, the set of cocircuits can be computed, since the cocircuits are exactly the inclusion-minimal non-empty signed sets that are orthogonal to all circuits. Since duality of oriented matroids is self-inverse, the opposite holds too.

\begin{definition}
    In an oriented matroid $\Matroid=(E,\Circuits)$, given a basis $B$ and an element $e\not\in B$, the \emph{fundamental circuit} $C(B,e)$ is the unique circuit $\CircuitA\in\Circuits$ that fulfills $\CircuitA_e = +$ and $\support{\CircuitA}\subseteq \Basis \cup \{e \}$.
\end{definition}
\begin{definition}\label[definition]{def:fundamentalCocircuit}
    In an oriented matroid $\Matroid=(E,\Circuits)$, given a basis $B$ and an element $e\in B$, the \emph{fundamental cocircuit} $C^*(B,e)$ is the unique cocircuit $D\in\Circuits^*$ that fulfills $D_e=+$ and  $\support{D}\cap (B\setminus\{e\})=\emptyset$.
\end{definition}

An oriented matroid $\ExtendedMatroid=(E\cup\{q\},\ExtendedCircuits)$ is called an \emph{extension of }$\Matroid$ if its \emph{minor}
$\ExtendedMatroid\setminus \Vector := (\GroundSet, \{\CircuitA \mid \CircuitA \in \ExtendedCircuits \text{ and }\CircuitA_\Vector = 0 \} )$ is equal to $\Matroid$.
\begin{definition}\label[definition]{def:localization}
    Given an oriented matroid $\Matroid$ on ground set $E$ with cocircuits $\Circuits^*$, as well as an element $q\not\in E$, a function $\sigma:\Circuits^*\rightarrow\{-,0,+\}$ defines the following family of signed sets on $E\cup\{q\}$
    
    \begin{align*}
\begin{split}
\ExtendedCircuits^*:=&\{(Y,\:\sigma(Y)):Y\in\mathcal{C}^*\}\:\cup\\
&\{(Y_1\circ Y_2,\:0):Y_1,Y_2\in\mathcal{C}^*, \text{ for adjacent }Y_1,Y_2\text{ with}\\
&\;\;\sigma(Y_1)=-\sigma(Y_2)\not=0\},
\end{split}
\end{align*}
    where the notation $(X,s)$ denotes the signed set where all elements of $E$ have the same sign as in $X$, and the new element $q$ gets sign $s$. For the definitions of the ``$\circ$'' operator and of adjacency, we refer the reader to \cite{bjoerner1999orientedmatroids}, since for the further discussion, only the first of the two sets forming $\ExtendedCircuits^*$ is relevant.
    
    The function $\sigma$ is called a \emph{localization} if $\ExtendedCircuits^*$ is again a valid set of cocircuits. Then, the oriented matroid $\ExtendedMatroid$ on the ground set $E\cup\{q\}$ with cocircuits $\ExtendedCircuits^*$ is called the \emph{extension of $\Matroid$ specified by $\sigma$}.
\end{definition}

\subsection{\texorpdfstring{\POMCP}{P-OMCP}}
We consider oriented matroids $\Matroid=(E_{2n}, \Circuits)$ on the ground set $E_{2n}=S\cup T$, which is made up of two parts $S=\{s_1,\ldots,s_n\}$ and $T=\{t_1,\ldots,t_n\}$ with $S\cap T=\emptyset$. We call a set $J\subseteq E_{2n}$ \emph{complementary} if it contains no \emph{complementary pair} $s_i,t_i$.

\begin{definition}[P-matroid]\label[definition]{def:Pmatroid}
An oriented matroid $\Matroid=(E_{2n},\Circuits)$ is a \emph{P-matroid} if \SetS is a basis and \Circuits contains no sign-reversing circuit. A \emph{sign-reversing circuit} is a circuit $\CircuitA$ such that for each complementary pair $s_i,t_i$ contained in $\support{\CircuitA}$, 
$\CircuitA_{\sets_i} = -\CircuitA_{\sett_i}$.
\end{definition}

\begin{example}
\label[example]{ex:PMatroid}
Let $\GroundSet=\{s_1, t_1\}$ and $\Circuits, \Circuits'$ collections of circuits represented by sign vectors with \begin{align*}
\Circuits= \PM, \qquad \Circuits'=  \left\{\begin{pmatrix}+\\ -\end{pmatrix}, \begin{pmatrix}-\\+\end{pmatrix} \right\}.
\end{align*}
Note that in all of our examples with $n=1$, the first entry of a sign vector is the sign assigned $s_1$, and the second entry is the sign assigned to $t_1$. The matroid $\Matroid=(\GroundSet,\Circuits)$ is a P-matroid. The matroid $\Matroid'=(\GroundSet,\Circuits')$ is \emph{not} a P-matroid, since both of its circuits are sign-reversing.
\end{example}

Note that recognizing P-matroids is \coNP-hard, which is an easy to derive corollary from the fact that P-matrix recognition is \coNP-hard~\cite{P-MatrixIsCoNP-complete}.

Let \Vector be such that $\Vector \notin \GroundSet_{2\length}$.
Then $\ExtendedGroundSet \coloneqq \SetS \cup \SetT \cup \{\Vector \}$, and we write $\ExtendedMatroid = (\ExtendedGroundSet, \ExtendedCircuits)$ for an extension of $\Matroid=(\GroundSet_{2\length},\Circuits)$.
Given an extension  $\ExtendedMatroid = (\ExtendedGroundSet, \ExtendedCircuits)$, the goal of the \acrfull*{OMCP} is to find a circuit $\CircuitA \in \ExtendedCircuits$ with 
$\CircuitA^-=\emptyset$,
$\CircuitA_\Vector = +$, and
$\CircuitA_{s_i} \CircuitA_{t_i} = 0$ for every $i \in [n]$.
Note that given a simple list of all circuits in \ExtendedCircuits, finding a circuit as above can be done easily in linear time. Thus, in the \OMCP, the matroid extension is provided %
by a circuit oracle (which can be either modeled as a black-box or given as a Boolean circuit) that given a set $B\subset \ExtendedGroundSet$ and another element $e\in\ExtendedGroundSet\setminus B$ either returns that $B$ is not a basis of $\ExtendedMatroid$, or returns the fundamental circuit $C(B,e)$ (recall that this is the unique circuit $\CircuitA \in \ExtendedCircuits$ with $\CircuitA_e = +$ and $\support{\CircuitA}\subseteq \Basis \cup \{e \}$).

It is known that in P-matroids and P-matroid extensions, every complementary set $B\subset S\cup T$ of size $n$ is a basis~\cite{klaus2012phd}.
Furthermore, it is known in every P-matroid extension, the \OMCP has a unique solution~\cite{todd1984matroids}.
A P-matroid extension is called \emph{non-degenerate} if for every complementary basis $B$, the circuit $C(B,q)$ is non-zero on all elements in $B\cup\{q\}$. The \acrfull*{POMCP} is the \OMCP with the additional promise that the given matroid extension is an extension of a P-matroid.

\begin{example}\label[example]{ex:PMatroidExtension}
For \ExtendedCircuits and $\ExtendedCircuits'$ as given below, $\ExtendedMatroid=(\{s_1,t_1,q\}, \ExtendedCircuits)$ and  $\ExtendedMatroid'=(\{s_1,t_1,q\}, \ExtendedCircuits')$ are both valid extensions of the P-matroid \Matroid from \Cref{ex:PMatroid}. The third entry of each sign vector denotes the sign assigned to the new element $q$.
\Cref{fig:realizationOfPME,fig:realizationOfPMED} show the realizations of the corresponding oriented matroids as arrangements of oriented hyperplanes through the origin; each one-dimensional cell corresponds to a circuit. The red shaded areas denote the areas where $q$ is positive. The circuits marked in red are the fundamental circuits $C(\{s_1\},q)$ and $C(\{t_1\},q)$. Clearly, $\ExtendedMatroid'$ is degenerate.

The unique solution of the \POMCP instance given by \ExtendedMatroid is $\begin{pmatrix}0&+&+\end{pmatrix}^T$, the unique solution in the \POMCP given by $\ExtendedMatroid'$ is  $\begin{pmatrix}0&0&+\end{pmatrix}^T$.

\begin{figure}[h!]
\begin{minipage}{0.5\textwidth}
\centering
\begin{tikzpicture}[scale = 0.8]
\small
\filldraw[red!10] (-4,-2) -- (4,2) -- (4,3) -- (-4, 3) -- cycle;
\filldraw[black] (0,0) circle (2pt);
\node at (0, -0.5) {000};
\hyperplane{$s_1$}{(-2,2)}{(2,-2)}
\hyperplane{$t_1$}{(-3,0.5)}{(3,-0.5)}
\hyperplane{$q$}{(-3,-1.5)}{(3,1.5)}
\filldraw[red] (-2.4,2.4) circle (2pt);
\node at (-3.5,3) {\textcolor{red}{$\begin{pmatrix}0 \\ + \\ +\end{pmatrix}$}};
\filldraw[] (2.4,-2.4) circle (2pt) node[anchor=north east]{$\begin{pmatrix}0 \\ - \\ -\end{pmatrix}$};
\filldraw[red] (-3,0.5) circle (2pt);
\node at (-4.5, 0.75) {\textcolor{red}{$\begin{pmatrix}- \\ 0 \\ +\end{pmatrix}$}};
\filldraw[] (3,-0.5) circle (2pt) node[anchor=north]{$\begin{pmatrix}+ \\ 0 \\ -\end{pmatrix}$};
\filldraw[] (-3.5,-1.75) circle (2pt) node[anchor=north west]{$\begin{pmatrix}- \\ - \\ 0\end{pmatrix}$};
\filldraw[] (3,1.5) circle (2pt) node[anchor=south east]{$\begin{pmatrix}+ \\ + \\ 0\end{pmatrix}$};
\end{tikzpicture}
\end{minipage}
\begin{minipage}{0.5\textwidth}
\begin{align*}
\ExtendedCircuits= \PME
\end{align*}
\end{minipage}
\caption{Realization of \ExtendedMatroid.}
\label{fig:realizationOfPME}
\end{figure}

\begin{figure}[h!]
\begin{minipage}{0.5\textwidth}
\centering
\begin{tikzpicture}[scale = 0.8]
\small
\filldraw[red!10] (-4,-2) -- (4,2) -- (4,3) -- (-4, 3) -- cycle;
\filldraw[black] (0,0) circle (2pt);
\node at (0, -0.5) {000};
\hyperplane{$s_1$ and $t_1$}{(-3,0.5)}{(3,-0.5)}
\hyperplane{$q$}{(-3,-1.5)}{(3,1.5)}
\filldraw[red] (-3,0.5) circle (2pt);
\node at (-4.5,0.5) {\textcolor{red}{$\begin{pmatrix}0 \\ 0 \\ +\end{pmatrix}$}};
\filldraw[] (3,-0.5) circle (2pt) node[anchor=north]{$\begin{pmatrix}0 \\ 0 \\ -\end{pmatrix}$};
\filldraw[] (-3.5,-1.75) circle (2pt) node[anchor=north west]{$\begin{pmatrix}- \\ - \\ 0\end{pmatrix}$};
\filldraw[] (3,1.5) circle (2pt) node[anchor=south east]{$\begin{pmatrix}+ \\ + \\ 0\end{pmatrix}$};
\end{tikzpicture}
\end{minipage}
\begin{minipage}{0.5\textwidth}
\begin{align*}
\ExtendedCircuits= \PMED
\end{align*}
\end{minipage}
\caption{Realization of $\ExtendedMatroid'$.}
\label{fig:realizationOfPMED}
\end{figure}
\end{example}

\subsection{Unique Sink Orientation (USO)}

The \emph{$n$-dimensional hypercube graph} $Q_n$ (in short, \emph{$n$-cube}) is the undirected graph on the vertex set \mbox{$V(Q_n) = \{0,1\}^n$}, where two vertices are connected by an edge if they differ in exactly one coordinate. 
An \emph{orientation} $O\colon V(Q_n) \rightarrow \{-,+\}^n$ assigns each vertex an orientation of its incident half-edges, where $O(v)_i=+$ denotes an outgoing half-edge from vertex $v$ in dimension $i$ and $O(v)_i=-$ denotes an incoming half-edge.
A \emph{unique sink orientation (USO)} is an orientation, such that every non-empty subcube (also called a \emph{face} of the cube) contains exactly one sink, i.e., a unique vertex $v$ with $O(v)_i=-$ for all dimensions $i$ in the subcube~\cite{szabo2001usos}. Note that in a USO, each edge is oriented \emph{consistently}, i.e., the orientations of both its half-edges agree.

\begin{lemma}[\szabo{}-Welzl Condition \cite{szabo2001usos}]\label[lemma]{lem:szabowelzl}
An orientation $O$ of $Q_n$ is a USO if and only if for all pairs of distinct vertices $v,w\in V(Q_n)$, there exists a dimension $i$ such that $i$ is \emph{spanned} by $v$ and $w$ (i.e., $v_i\neq w_i$), and the half-edges in dimension $i$ incident to $v$ and $w$ are oriented in the same direction, i.e., $O(\nodeA)_i \not= O(\nodeB)_i$.
\end{lemma}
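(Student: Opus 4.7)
The plan is to prove both directions separately, with the forward direction requiring an auxiliary edge-flip observation.

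For the backward direction, I would show directly that the \SWC implies every non-empty face $F$ (with dimension set $D_F$) has a unique sink. Any two distinct vertices $u_1, u_2 \in V(F)$ agree on all coordinates outside $D_F$, so the \SWC provides some $i \in D_F$ with $O(u_1)_i \neq O(u_2)_i$. Hence the restriction map $u \mapsto O(u)|_{D_F}$ is injective on $V(F)$, and since $|V(F)| = 2^{|D_F|} = |\{-,+\}^{D_F}|$, it is a bijection. The (unique) preimage of the all-$-$ pattern is then the unique sink of $F$.

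For the forward direction, I would assume $O$ is a USO and fix distinct $v, w$. Let $D = \{i : v_i \neq w_i\}$ and let $C$ be the minimal subcube containing both, whose dimension set is $D$. The key intermediate step I would prove is an \emph{edge-flip lemma}: reversing the orientations of all edges in a single coordinate direction $k$ preserves the USO property. The proof idea is that a face $F$ not containing dimension $k$ is unaffected, while a face $F$ containing $k$ splits on $k$ into two sub-faces $F_0, F_1$ with sinks $s_0, s_1$ (unchanged by the flip, since they involve only dimensions in $D_F\setminus\{k\}$); the sink of $F$ is whichever of $s_0, s_1$ has an incoming edge in dimension $k$, and flipping dimension $k$ simply swaps which one satisfies this.

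Granted the edge-flip lemma, for each target pattern $\tau \in \{-,+\}^D$ I construct a USO $O^\tau$ by flipping, for every $i \in D$ with $\tau_i = +$, all edges of $Q_n$ in dimension $i$. The unique sink $s^\tau$ of the restriction of $O^\tau$ to $C$ then satisfies $O(s^\tau)|_D = \tau$ in the original orientation. This shows the restriction map $u \mapsto O(u)|_D$ is surjective from $V(C)$ onto $\{-,+\}^D$, hence bijective by equal cardinality. Applied to the distinct vertices $v, w \in V(C)$, this gives $O(v)|_D \neq O(w)|_D$, so some $i \in D$ (automatically satisfying $v_i \neq w_i$) has $O(v)_i \neq O(w)_i$, as required.

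The main obstacle is establishing the edge-flip lemma cleanly, and in particular ruling out that flipping could create or destroy a sink in some face; everything else reduces to counting or a direct injectivity argument.
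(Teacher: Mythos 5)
The paper cites this lemma from Szab\'o and Welzl~\cite{szabo2001usos} without giving its own proof, so there is no in-paper argument to compare against; I can only assess correctness. Your proof is correct. The backward direction (injectivity of $u \mapsto O(u)|_{D_F}$ on each face $F$, hence bijectivity by counting, hence a unique preimage of the all-$-$ pattern) is clean. The forward direction hinges on the edge-flip lemma, which is a true and well-known fact about USOs, and your sketch of it is sound: for a face $F$ not spanning $k$ nothing changes; for a face spanning $k$, any sink must be one of the two sub-sinks $s_0, s_1$, exactly one of which has its $k$-half-edge incoming (two would give two sinks, zero would give no sink), and negating $O(\cdot)_k$ at both preserves ``exactly one.'' From there the target-pattern construction correctly shows surjectivity, hence bijectivity, of the restricted outmap, and the conclusion follows. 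This is essentially the standard argument used to establish that the outmap of a USO is a bijection on every subcube, so your route agrees with the classical proof rather than diverging from it.
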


We consider the classical promise search problem \USO; given an orientation $O$ that is promised to be a USO, the task is to find the unique global sink $v$ with $O(v)_i=-$ for all $i$. Similarly to the \OMCP, simply listing all values of $O$ would take space exponential in the dimension $n$, and a sink could be found in linear time in terms of this input size. Thus, in the \USO problem the function $O$ is also given by an oracle, which can again be considered either a black-box or a Boolean circuit.

\subsection{Klaus' Reduction} \label{sec:ClassicReduction}

Klaus~\cite{klaus2012phd} showed that a non-degenerate P-matroid extension can be translated to a USO. 

\begin{lemma}[Klaus~\cite{klaus2012phd}]\label{lem:originalReduction}
The \POMCP with the promise that $\ExtendedMatroid=(\ExtendedGroundSet,\ExtendedCircuits)$ is non-degenerate can be polynomial-time reduced to the \USO problem.
\end{lemma}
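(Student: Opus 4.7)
The plan is to imitate the classical Stickney--Watson reduction from non-degenerate \PLCP to \USO at the oriented-matroid level: identify each vertex of the $n$-cube with a complementary basis of $\Matroid$, read off the local orientation from one call to the circuit oracle, and check the \SWC using the P-matroid property together with circuit elimination.

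\emph{Construction.} For each vertex $v\in\{0,1\}^n$ I define the complementary set $B_v\subseteq S\cup T$ by $s_i\in B_v \Leftrightarrow v_i = 0$. Since every complementary subset of size $n$ is a basis of a P-matroid extension, the oracle returns the fundamental circuit $X^v := C(B_v,q)$, and non-degeneracy guarantees $X^v_e\in\{-,+\}$ for all $e\in B_v$. Writing $b_i^v$ for the unique element of $B_v\cap\{s_i,t_i\}$, I set $O(v)_i = -$ iff $X^v_{b_i^v} = +$. Evaluating $O(v)$ costs one oracle call, so the reduction is polynomial. A vertex $v^*$ with $O(v^*)_i = -$ for all $i$ then has $X^{v^*}_e = +$ for every $e\in B_{v^*}\cup\{q\}$; in particular $X^{v^*}$ has $X^{v^*-}=\emptyset$, $X^{v^*}_q=+$ and satisfies complementarity, so it is the \OMCP solution, and conversely every \OMCP solution is a fundamental circuit of this form and thus corresponds to a unique sink candidate.

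\emph{USO property (the main obstacle).} I verify the \SWC of \Cref{lem:szabowelzl}: for any distinct $v,w$ there must be a coordinate $i$ with $v_i \neq w_i$ and $O(v)_i \neq O(w)_i$. Suppose for contradiction that $O(v)_i = O(w)_i$ on every spanned coordinate. Unpacking the definition of $O$, on each spanned complementary pair $(s_i,t_i)$ the signs of $X^v_{b_i^v}$ and $X^w_{b_i^w}$ combine into the sign-reversing pattern of \Cref{def:Pmatroid}. Applying the circuit elimination axiom (C3) iteratively to $X^v$ and $-X^w$, starting from $q$ (where the two circuits carry opposite signs), I aim to produce a circuit $Z\in\Circuits$ with $Z_q = 0$, $\support{Z}\subseteq B_v\cup B_w$, and which is sign-reversing on every complementary pair it meets, directly contradicting \Cref{def:Pmatroid} and hence closing the \SWC verification.

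\emph{Where the work lies.} The hardest step is controlling signs along the elimination. Unspanned pairs (with $v_i = w_i$) cannot contribute a sign-reversing entry because $b_i^v = b_i^w$ is shared by both bases and only this shared element can appear in $\support{Z}$; the spanned pairs carry the sign-reversal by the contradictory assumption. But (C3) only constrains the signs of $Z$ loosely via set containment, so to actually pin them down I would appeal to orthogonality (\Cref{lem:orthogonal}) between $Z$ and the appropriate fundamental cocircuits of \Cref{def:fundamentalCocircuit} at $B_v$ and $B_w$, together with an induction on $|B_v\triangle B_w|$ to rule out cancellations during the successive eliminations. Once $Z$ is confirmed to be a sign-reversing circuit of $\Matroid$, the P-matroid hypothesis gives the contradiction that proves $O$ is a USO, completing the reduction.
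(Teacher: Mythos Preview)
Your construction and solution-recovery are exactly Klaus' reduction as the paper presents it, and your plan for the USO property---eliminate $q$ from $X^v$ and $-X^w$ via (C3) to obtain a sign-reversing circuit---is the same idea the paper uses (in the more general \Cref{lem:pmatroidszabowelzl}). So the approach is right.

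Where you overshoot is the ``where the work lies'' paragraph. No iteration, no induction on $|B_v\triangle B_w|$, and no cocircuit orthogonality is needed: a \emph{single} application of (C3) to $X^v$ and $-X^w$ at $q$ already gives a circuit $Z$ with $q\notin\support{Z}$ and $\support{Z}\subseteq B_v\cup B_w$. The sign control you worry about is automatic, because for each spanned coordinate $i$ the element $b_i^v$ lies only in $B_v$ (hence only in $\support{X^v}$) and $b_i^w$ lies only in $B_w$ (hence only in $\support{X^w}$). Thus the containments $Z^+\subseteq (X^v)^+\cup(-X^w)^+$ and $Z^-\subseteq (X^v)^-\cup(-X^w)^-$ force $Z_{b_i^v}\in\{0,X^v_{b_i^v}\}$ and $Z_{b_i^w}\in\{0,-X^w_{b_i^w}\}$ with no ambiguity. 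Your contradictory assumption $O(v)_i=O(w)_i$ unpacks to $X^v_{b_i^v}=X^w_{b_i^w}$, so whenever both $b_i^v,b_i^w\in\support{Z}$ you get $Z_{b_i^v}=-Z_{b_i^w}$ immediately. Since $\support{Z}$ must contain at least one such complementary pair (else it would sit inside a complementary set of size $n$, contradicting independence), $Z$ is sign-reversing and you are done. The ``loose'' set containment of (C3) is in fact tight here precisely because the two fundamental circuits have disjoint supports on the spanned pairs.
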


In this reduction, a \POMCP on the ground set $\ExtendedGroundSet$ is turned into a USO of the $n$-cube. Every vertex~$v$ of the $n$-cube is associated with a complementary basis $B(v)\subset S\cup T$. For each $i \in [n]$, $s_i\in B(v)$ if $v_i=0$, otherwise $t_i\in B(v)$. The orientation $O(v)$ of $v$ is then computed using the fundamental circuit $C:=C(B(v),q)$.

\[
O(v)_i := \begin{cases}
+ & \text{if } C_{s_i} = - \text{ or } C_{t_i} = - ,\\
- & \text{if } C_{s_i} = + \text{ or } C_{t_i} = +.
\end{cases}
\]
As the \POMCP instance is non-degenerate, no other case can occur.
Klaus showed that the computed orientation $O$ is a USO. It is easy to see that its global sink $v$
corresponds to a fundamental circuit $C(B(v),q)$ which is positive on all elements of $B(v)\cup\{q\}$ and thus forms a solution to the \POMCP instance.

\begin{example}
Recall the P-matroid extension \ExtendedMatroid from \Cref{ex:PMatroidExtension}. \Cref{fig:USO_of_ExtendedMatroid} shows the USO created by this reduction, where $B(0)=\{s_1\}$ and $B(1)=\{t_1\}$.

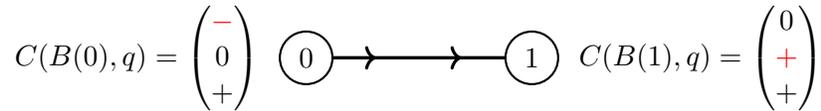
\begin{figure}[h!]
\centering
\begin{tikzpicture}[scale=1.5, roundnode/.style={circle, draw= black, thick, minimum size=7mm}]
\node[roundnode] (S) at (0,0) {$0$};
\node at (-1.5,0) {$C(B(0), q) = \begin{pmatrix} \textcolor{red}{-} \\ 0 \\ + \end{pmatrix}$};
\node[roundnode] (T) at (2,0) {$1$};
\node at (3.5,0) {$C(B(1), q) = \begin{pmatrix}0 \\ \textcolor{red}{+} \\ + \end{pmatrix}$};

\begin{scope}[very thick,decoration={markings,mark=at position 0.25 with {\arrow{>}}}] 
    \draw[postaction={decorate}] (S) -- (T);
\end{scope}

\begin{scope}[very thick,decoration={markings,mark=at position 0.75 with {\arrow{>}}}] 
    \draw[postaction={decorate}] (S) -- (T);
\end{scope}

\end{tikzpicture}
\caption{The USO created from \ExtendedMatroid  by Klaus' reduction.}
\label{fig:USO_of_ExtendedMatroid}
\end{figure}
\end{example}

\section{The Effect of Degeneracy on the Resulting USOs}\label[section]{sec:structural}
In the above reduction, if the \POMCP instance is degenerate, we sometimes cannot decide which way to orient an edge since $C_{s_i}=C_{t_i}=0$. For now, we leave these half-edges unoriented.
This leads to a \emph{partial orientation} of the hypercube, which is a function $O\colon V(Q_n) \rightarrow \{-,0,+\}^n$ where~$O(v)_i=0$ denotes an unoriented half-edge, and $O(v)_i=-$ and $O(v)_i=+$ denote an incoming and outgoing half-edge, respectively, just as in the definition of an orientation. Half-edges that are unoriented are also called \emph{degenerate}. We call a partial orientation arising from applying Klaus' reduction to a possibly degenerate P-matroid extension a \emph{partial P-matroid USO (PPU)}.
In this section we aim to understand the structure of unoriented (half-)edges in PPUs.

Not every partial orientation can be turned into a USO by directing the unoriented (half-)edges. We thus state the following condition inspired by the \szabo{}-Welzl condition.
\begin{definition}\label[definition]{def:partialSW}
    A partial orientation $O$ is said to be \emph{partially \szabo{}-Welzl} if for any two distinct vertices $v,w\in V(Q_n)$, either
    
    \begin{gather}
    O(v)_i=O(w)_i=0 \text{ for all }i \text{ such that }v_i\not=w_i,\text{ or}\\
    \text{there exists an $i$ such that } v_i\not=w_i \text{ and } \{O(v)_i,O(w)_i\}=\{-,+\}. %
    \end{gather}
\end{definition}

\begin{lemma}\label[lemma]{lem:completable}
    A partial orientation $O$ which is partially \szabo{}-Welzl can be extended to a USO.%
\end{lemma}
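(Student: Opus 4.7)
The plan is to construct an explicit completion $O'$ of $O$ by a simple canonical rule and then verify that $O'$ satisfies the Szabó-Welzl condition of \Cref{lem:szabowelzl}.

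First I would establish a preliminary observation: unoriented half-edges come in matched pairs along each edge. That is, if $(v,w)$ is an edge of $Q_n$ in dimension $i$ and $O(v)_i = 0$, then also $O(w)_i = 0$. This follows by applying \Cref{def:partialSW} to the pair $(v,w)$, which spans only dimension $i$: clause~(2) would require $\{O(v)_i, O(w)_i\} = \{-,+\}$, which is incompatible with $O(v)_i = 0$, so clause~(1) must hold and forces $O(w)_i = 0$. As a byproduct, the same case analysis shows that every already-oriented edge of $O$ is consistent, since the non-degenerate case forces clause~(2).

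Given this, I would define $O'$ to agree with $O$ wherever $O$ is already oriented, and on every remaining unoriented half-edge to set $O'(v)_i := -$ if $v_i = 0$ and $O'(v)_i := +$ if $v_i = 1$. Equivalently, the canonical rule orients every previously unoriented edge from its $1$-side towards its $0$-side. The matched-pair observation guarantees that this assignment produces a full orientation of $Q_n$ in which every edge is consistent: pre-existing edges are consistent by the argument above, and the newly oriented edges are consistent because the rule always assigns opposite signs to the two endpoints of a dimension-$i$ edge.

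The final step would verify the Szabó-Welzl condition for $O'$ by a case split on any pair $u \neq v$ according to which clause of partial Szabó-Welzl it satisfied in $O$. If the pair already satisfied clause~(2), the witnessing dimension is unchanged by the completion and still witnesses Szabó-Welzl in $O'$. If the pair satisfied clause~(1), then every dimension $j$ with $u_j \neq v_j$ had $O(u)_j = O(v)_j = 0$; after the canonical completion, any such $j$ gives $\{O'(u)_j, O'(v)_j\} = \{-,+\}$ simply because $u_j \neq v_j$, so any such dimension witnesses the Szabó-Welzl condition in $O'$. I do not anticipate a serious obstacle: the only real content is the matched-pair observation, and once that is in hand the rest of the argument is a direct check against \Cref{lem:szabowelzl}.
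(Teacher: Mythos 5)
Your proposal is correct and follows essentially the same approach as the paper's proof: observe that unoriented half-edges come in matched pairs, orient every unoriented edge ``downwards'' (from the $1$-side to the $0$-side), and verify the \szabo{}-Welzl condition by splitting on which clause of \Cref{def:partialSW} each vertex pair satisfied. The paper states the matched-pair observation and the two-case check more tersely, but the argument is identical.
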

\begin{proof}
    We first note that in a partial orientation that is partially \szabo{}-Welzl, for every edge either both or none of the two half-edges are unoriented. By orienting all unoriented edges of $O$ from the vertex with more $1$s to the vertex with fewer $1$s (i.e., ``downwards''), any two vertices that previously fulfilled condition (1) of \Cref{def:partialSW} now fulfill the \szabo{}-Welzl condition as in \Cref{lem:szabowelzl}. Note that condition (2) of \Cref{def:partialSW} is equivalent to this classical condition on full (non-partial) orientations. We conclude that all pairs of vertices must now fulfill the \szabo{}-Welzl condition as in \Cref{lem:szabowelzl}.
\end{proof}

Note that while \Cref{lem:completable} shows that being partially \szabo{}-Welzl is sufficient for being extendable to a USO, the condition is not necessary. Borzechowski and Weber showed that deciding whether a partial orientation given by a Boolean circuit is extendable to a USO is \PSPACE-complete~\cite{borzechowski2023phases}.

\begin{lemma}\label[lemma]{lem:pmatroidszabowelzl}
    A partial P-matroid USO is partially \szabo{}-Welzl.
\end{lemma}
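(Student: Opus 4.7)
The plan is, for any two distinct vertices $\nodeA,\nodeB\in V(Q_n)$, to compare their fundamental circuits $\CircuitA:=\fundamentalCircuit(B(\nodeA),\Vector)$ and $\CircuitB:=\fundamentalCircuit(B(\nodeB),\Vector)$ in $\ExtendedMatroid$, and to conclude either that all spanned coordinates are degenerate at both vertices, or to produce a spanned coordinate where the orientations disagree. I split on whether $\CircuitA$ and $\CircuitB$ are identical, and in the nontrivial case I apply circuit elimination (axiom (C3)) and invoke the P-matroid property.

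If $\CircuitA=\CircuitB$, then $\support{\CircuitA}\subseteq B(\nodeA)\cup\{\Vector\}$ and $\support{\CircuitB}\subseteq B(\nodeB)\cup\{\Vector\}$ together force $\support{\CircuitA}\subseteq (B(\nodeA)\cap B(\nodeB))\cup\{\Vector\}$. For any spanned dimension $i$ (i.e., $\nodeA_i\neq\nodeB_i$), exactly one of $\sets_i,\sett_i$ lies in $B(\nodeA)$ and the other in $B(\nodeB)$, so neither lies in the intersection. Hence $\CircuitA_{\sets_i}=\CircuitA_{\sett_i}=\CircuitB_{\sets_i}=\CircuitB_{\sett_i}=0$, and Klaus' orientation rule yields $\orientation(\nodeA)_i=\orientation(\nodeB)_i=0$. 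This is condition (1) of \Cref{def:partialSW}.

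Otherwise $\CircuitA\neq\CircuitB$, so I apply (C3) to $\CircuitA$ and $-\CircuitB$ at the element $\Vector\in\CircuitA^+\cap(-\CircuitB)^-$ (using $\CircuitA\neq -(-\CircuitB)=\CircuitB$) to obtain a circuit $\CircuitC\in\ExtendedCircuits$ with $\Vector\notin\support{\CircuitC}$, $\CircuitC^+\subseteq(\CircuitA^+\cup\CircuitB^-)\setminus\{\Vector\}$ and $\CircuitC^-\subseteq(\CircuitA^-\cup\CircuitB^+)\setminus\{\Vector\}$. Because $\CircuitC_\Vector=0$, $\CircuitC$ belongs to the minor $\ExtendedMatroid\setminus\Vector=\Matroid$. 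Since $\Matroid$ is a P-matroid, $\CircuitC$ is not sign-reversing, and so $\support{\CircuitC}$ must actually contain some complementary pair $(\sets_i,\sett_i)$ satisfying $\CircuitC_{\sets_i}=\CircuitC_{\sett_i}$; otherwise the defining condition of sign-reversing would be vacuously true.

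Finally, any such pair must come from a spanned dimension: if $\nodeA_i=\nodeB_i$, one of $\sets_i,\sett_i$ lies in neither basis, hence has sign $0$ in both $\CircuitA$ and $\CircuitB$, hence also in $\CircuitC$, contradicting $(\sets_i,\sett_i)\subseteq\support{\CircuitC}$. So WLOG $\nodeA_i=0$ and $\nodeB_i=1$, giving $\CircuitB_{\sets_i}=\CircuitA_{\sett_i}=0$. The $\CircuitC^{\pm}$ inclusions then pin down $\CircuitC_{\sets_i}=\CircuitA_{\sets_i}$ and $\CircuitC_{\sett_i}=-\CircuitB_{\sett_i}$, both nonzero, so $\CircuitC_{\sets_i}=\CircuitC_{\sett_i}$ rewrites as $\CircuitA_{\sets_i}=-\CircuitB_{\sett_i}$. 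Klaus' orientation rule assigns $\orientation(\nodeA)_i$ the opposite sign of $\CircuitA_{\sets_i}$ and $\orientation(\nodeB)_i$ the opposite sign of $\CircuitB_{\sett_i}$, so $\orientation(\nodeA)_i=-\orientation(\nodeB)_i\neq 0$, which is condition (2) of \Cref{def:partialSW}. The most delicate step is this last one: tracking, under the sign flip introduced by using $-\CircuitB$ in the elimination, which sign of $\CircuitC$ must originate from which of $\CircuitA,\CircuitB$, and translating agreement of signs in $\CircuitC$ back to disagreement of orientations at $\nodeA$ and $\nodeB$.
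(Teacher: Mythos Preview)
Your proof is correct and follows essentially the same approach as the paper: apply circuit elimination (C3) to $X$ and $-Y$ at $q$, then use that the resulting circuit $Z$ lies in $\Matroid$ and hence cannot be sign-reversing. The paper packages this as a proof by contradiction (assume both conditions of \Cref{def:partialSW} fail and derive a sign-reversing circuit), whereas you argue directly and make the sign bookkeeping more explicit; one very minor quibble is that ``vacuously true'' only literally covers the case where $\support{Z}$ contains no complementary pair, but your conclusion still holds in the remaining case (all complementary pairs in $\support{Z}$ have opposite signs), so the logic is sound.
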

\begin{proof}
    Assume two vertices $v,w$ in a PPU $O$ failed both conditions of \Cref{def:partialSW}. Let $V=C(B(v),q)$ and $W=C(B(w),q)$ be the fundamental circuits used to derive $O(v)$ and $O(w)$. Since $v$ and $w$ violate the first condition of \Cref{def:partialSW}, $V\not=W$ and $V\neq-W$. Applying circuit axiom (C3) to $V$ and $-W$ to eliminate the element $q$ shows that there exists a circuit $Z$ with certain properties. Since $q\not\in\support{Z}$, $Z$ must contain both $s_i$ and $t_i$ for at least one $i\in [n]$ (since all complementary sets are independent in a P-matroid extension). As we assumed that $v$ and $w$ violate the second condition of \Cref{def:partialSW}, we know that $s_i$ and $t_i$ must have opposite signs in $Z$. Since this holds for all $i$ such that $\{s_i,t_i\}\subseteq \support{Z}$, $Z$ is a sign-reversing circuit of the underlying P-matroid, which contradicts \Cref{def:Pmatroid}. We conclude that no two vertices can fail \Cref{def:partialSW}.
\end{proof}

\Cref{lem:completable,lem:pmatroidszabowelzl} together are enough to resolve degeneracy in Klaus' reduction --- we can simply orient all unoriented edges downwards.
Thus, we can restate \Cref{lem:originalReduction} \emph{without} the promise of non-degeneracy.

\begin{corollary}
The \POMCP without a promise of non-degeneracy can be polynomial-time reduced to the \USO problem.
\end{corollary}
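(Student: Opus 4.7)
The plan is to chain Klaus' reduction with \Cref{lem:pmatroidszabowelzl,lem:completable}. Given an arbitrary (possibly degenerate) P-matroid extension \ExtendedMatroid, I would first apply Klaus' reduction verbatim, leaving the half-edge at vertex $v$ in dimension $i$ unoriented whenever the fundamental circuit $C = C(B(v),q)$ satisfies $C_{s_i} = C_{t_i} = 0$. The resulting partial orientation is a PPU, so by \Cref{lem:pmatroidszabowelzl} it is partially \szabo-Welzl, and by the constructive proof of \Cref{lem:completable} it extends to a USO $O'$ of $Q_n$ by orienting every unoriented edge from its endpoint with more $1$s to its endpoint with fewer $1$s. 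Each call to $O'(v)$ requires a single fundamental-circuit oracle query on \ExtendedMatroid plus $O(n)$ local work, so the overall reduction is polynomial time.

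Next I would verify that the global sink of $O'$ encodes a \POMCP solution. Let $v$ be this sink and $C = C(B(v),q)$. Since $C$ is a fundamental circuit, $C_q = +$ and $C$ vanishes outside $B(v) \cup \{q\}$, so $C_{s_i}C_{t_i} = 0$ holds automatically for every $i$. If some $b \in B(v)$ had $C_b = -$, Klaus' rule would orient the corresponding half-edge at $v$ outgoing in the PPU, and since the downward completion never re-orients an already-oriented half-edge, this would contradict $v$ being a sink of $O'$. Hence every basis coordinate of $C$ lies in $\{0,+\}$, giving $C^- = \emptyset$, and $C$ is a valid \POMCP solution.

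The one subtlety I expect to require care is that the downward tie-breaking of degenerate half-edges is consistent with the sink-to-solution correspondence. This boils down to the observation that a degenerate edge at $v$ in dimension $i$ becomes incoming at $v$ after tie-breaking exactly when $v_i = 0$, i.e.\ $s_i \in B(v)$; in that situation $C_{s_i}=C_{t_i}=0$ by the very definition of degeneracy, so no negative coordinate is introduced at the sink and the solution condition $C_{s_i}C_{t_i}=0$ is trivially preserved. Beyond this local check, no argument past \Cref{lem:pmatroidszabowelzl,lem:completable} is required.
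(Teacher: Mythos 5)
Your proposal is correct and follows essentially the same route as the paper: apply Klaus' reduction to get a PPU, invoke \Cref{lem:pmatroidszabowelzl} and \Cref{lem:completable} to orient all degenerate edges downwards, and observe that the sink of the resulting USO still yields a fundamental circuit with no negative entries. You spell out the sink-to-solution check (and the fact that a degenerate dimension at the sink must have $v_i=0$) a bit more explicitly than the paper, which simply leaves this as a direct consequence of the two lemmas, but the argument is the same.
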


We observe an even stronger structural property, related to the concept of hypervertices, introduced by Schurr and \szabo{}~\cite{schurr2004quadraticbound}. A face $f$ in a (partial) orientation $O$ is called a \emph{hypervertex} if for all dimensions $i$ \emph{not} spanned by $f$, and for all vertices $v, w \in f$ it holds that $O(v)_i = O(w)_i$, i.e., all vertices in that face have the same orientation towards the rest of the cube.
Schurr and \szabo{} \cite[Corollary 6]{schurr2004quadraticbound} showed that in a USO, the orientation within each hypervertex can be replaced by an arbitrary USO without destroying the property that the orientation of the whole cube is a USO.

\begin{lemma}\label[lemma]{lem:unorientedsubcubes}
    For every partial P-matroid USO $O$ there exists a set $F$ of pairwise vertex-disjoint faces such that the set of unoriented half-edges in $O$ is exactly the union of the sets of half-edges in each $f\in F$. Furthermore, each $f\in F$ is a hypervertex.
\end{lemma}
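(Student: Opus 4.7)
The plan is to exhibit $F$ as the set of equivalence classes of the partition of $V(Q_n)$ under the relation $v\sim w \iff C(B(v),q) = C(B(w),q)$, and to show that each class is a face that is a hypervertex whose spanning dimensions are exactly the degenerate dimensions of its vertices.

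The key observation I would prove first is that flipping a degenerate dimension preserves the fundamental circuit. Concretely, suppose $i$ is a degenerate dimension at $v$, and (without loss of generality) $v_i=0$, so $s_i\in B(v)$ and $C(B(v),q)_{s_i}=0$. Let $v'$ be the neighbor of $v$ in dimension $i$, so $B(v')=(B(v)\setminus\{s_i\})\cup\{t_i\}$. Since $t_i\notin B(v)$ forces $C(B(v),q)_{t_i}=0$, and $s_i$ has already been eliminated from the support, we have $\support{C(B(v),q)}\subseteq B(v')\cup\{q\}$ together with $C(B(v),q)_q=+$. By uniqueness of the fundamental circuit, $C(B(v'),q)=C(B(v),q)$. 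Consequently $v\sim v'$, and the set $D(v)$ of degenerate dimensions at $v$ equals $D(v')$.

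Next I would identify each equivalence class $V_C:=\{v : C(B(v),q)=C\}$ with a face. Fix any $v_0\in V_C$ and let $f$ be the face spanned by $v_0$ in the dimensions $D(v_0)$. Iterating the key observation along any sequence of flips within $D(v_0)$ shows $f\subseteq V_C$. Conversely, for any $w\in V_C$, the support of $C$ lies in $B(v_0)\cap B(w)$; hence for every dimension $i$ in which $B(v_0)$ and $B(w)$ differ, neither $s_i$ nor $t_i$ belongs to $B(v_0)\cap B(w)$, so $C_{s_i}=C_{t_i}=0$, which is precisely the statement $i\in D(v_0)$. Thus $w$ differs from $v_0$ only in dimensions of $D(v_0)$, i.e., $w\in f$.

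Finally, letting $F$ be the collection of these faces $V_C$, the classes partition $V(Q_n)$, so the faces in $F$ are pairwise vertex-disjoint. Within any $f=V_C$, all vertices share the circuit $C$, so their orientations in dimensions outside $D(v_0)$ agree; this is the hypervertex condition. The unoriented half-edges at any $v\in f$ are exactly those in the dimensions of $D(v)=D(v_0)$, which are the dimensions spanning $f$; summing over all $f\in F$ and all their vertices exhausts precisely the unoriented half-edges of $O$.

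The main obstacle is the converse direction in the second paragraph: showing that being equivalent forces agreement outside the degenerate dimensions. This is where the uniqueness of fundamental circuits, together with the fact that all complementary subsets of size $n$ are bases in a P-matroid extension, is essential; the rest of the argument is bookkeeping using \Cref{def:fundamentalCocircuit}-style uniqueness.
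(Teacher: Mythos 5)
Your proposal is correct and follows essentially the same approach as the paper: both rest on the observation that the support of $C(B(v),q)$ stays inside $B(w)\cup\{q\}$ for any $w$ obtained from $v$ by flipping degenerate dimensions, so uniqueness of fundamental circuits forces $C(B(v),q)=C(B(w),q)$, making the degenerate-dimension face a hypervertex and making the collection of such faces a partition. You just spell out the paper's one-step argument as a flip-by-flip iteration plus an explicit converse showing each equivalence class is exactly the corresponding face; this extra direction is not strictly needed (once $f(v)\subseteq V_C$ and $D(w)=D(v)$ for $w\in f(v)$ are known, the faces $f(v)$ already partition the cube), but it does no harm.
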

\begin{proof}
    Let $v$ be a vertex of a PPU, and let $w$ be any other vertex within the face spanned by the unoriented edges incident to $v$, i.e., $w_i=v_i$ for all $i$ such that $O(v_i)\neq 0$. Then, the fundamental circuit $C(B(v),q)$ fulfills all the conditions that a circuit has to fulfill to be the fundamental circuit $C(B(w),q)$. Since fundamental circuits are unique in all oriented matroids~\cite{klaus2012phd}, we must have $C(B(v),q)=C(B(w),q)$ and thus $v$ and $w$ must be oriented the same way, which implies the lemma.
\end{proof}

\Cref{lem:completable,lem:pmatroidszabowelzl,lem:unorientedsubcubes}, and \cite[Corollary 6]{schurr2004quadraticbound} imply that the unoriented subcubes of a PPU can in fact be oriented according to \emph{any} USO:

\begin{corollary} \label[corollary]{cor:CompleteUSOArbitrarily}
Let $O$ be a PPU and let $O'$ be the orientation obtained by independently orienting each unoriented face $f$ of $O$ according to some USO of the same dimension as $f$. Then, $O'$ is a USO.
\end{corollary}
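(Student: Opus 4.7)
The plan is to combine the three preceding lemmas with Corollary 6 of Schurr and \szabo{}~\cite{schurr2004quadraticbound}. My approach begins by invoking \Cref{lem:pmatroidszabowelzl} and \Cref{lem:completable} to obtain a reference USO $O''$ from $O$ by orienting every unoriented edge downwards. This $O''$ agrees with $O$ on every already-oriented half-edge; the only changes occur on half-edges lying inside some face $f \in F$, where $F$ is the collection of pairwise vertex-disjoint hypervertex faces provided by \Cref{lem:unorientedsubcubes}.

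Next, I would verify that each $f \in F$ is still a hypervertex in $O''$, not only in $O$. This follows because $O''$ differs from $O$ only on half-edges of vertices in $f$ in dimensions spanned by $f$: the hypervertex property constrains only half-edges in dimensions \emph{not} spanned by $f$, and these remain untouched in the passage from $O$ to $O''$.

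I would then obtain $O'$ from $O''$ by iteratively applying \cite[Corollary 6]{schurr2004quadraticbound} to each $f \in F$, replacing the downward orientation of $f$ in $O''$ with the prescribed USO of the same dimension as $f$. Each such replacement preserves the USO property of the whole cube. The main technical point to verify is that performing a replacement inside some $f \in F$ does not destroy the hypervertex property of any remaining $f' \in F$ that still needs to be treated: since the faces in $F$ are pairwise vertex-disjoint by \Cref{lem:unorientedsubcubes}, no half-edge incident to a vertex of $f'$ is touched when we modify the interior of $f$, so $f'$ remains a hypervertex and the next invocation of \cite[Corollary 6]{schurr2004quadraticbound} remains valid. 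Iterating over all $f \in F$ therefore yields precisely the orientation $O'$, which is hence a USO.
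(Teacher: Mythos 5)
Your proof is correct and follows essentially the same route as the paper: invoke \Cref{lem:pmatroidszabowelzl} and \Cref{lem:completable} to get a completion to a USO, invoke \Cref{lem:unorientedsubcubes} to identify the unoriented faces as pairwise vertex-disjoint hypervertices, and then apply \cite[Corollary 6]{schurr2004quadraticbound} to each. The paper states this tersely; you additionally spell out the two routine verifications (that passing to the downward completion $O''$ preserves the hypervertex property of each $f\in F$, and that vertex-disjointness lets the replacements be performed independently), which the paper leaves implicit.
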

\begin{proof}
    By \Cref{lem:pmatroidszabowelzl,lem:completable}, $O$ can be extended to some USO. By \Cref{lem:unorientedsubcubes}, each unoriented face is a hypervertex. 
    By \cite[Corollary 6]{schurr2004quadraticbound}, each such hypervertex can be reoriented according to an arbitrary USO while preserving that the whole orientation is a USO.
\end{proof}

\begin{example}
Recall the P-matroid extension $\ExtendedMatroid'$ from \Cref{ex:PMatroidExtension}. \Cref{fig:USO_of_ExtendedMatroidDegenerate} shows the USO created by this reduction, where $B(0)=\{s_1\}$ and $B(1)=\{t_1\}$.

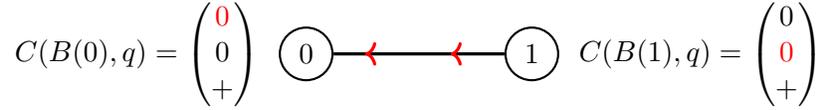
\begin{figure}[h!]
\centering
\begin{tikzpicture}[scale=1.5, roundnode/.style={circle, draw= black, thick, minimum size=7mm}]
\node[roundnode] (S) at (0,0) {$0$};
\node at (-1.5,0) {$C(B(0), q) = \begin{pmatrix}\textcolor{red}{0} \\ 0 \\ + \end{pmatrix}$};
\node[roundnode] (T) at (2,0) {$1$};
\node at (3.5,0) {$C(B(1), q) = \begin{pmatrix}0 \\ \textcolor{red}{0} \\ + \end{pmatrix}$};

\begin{scope}[very thick,decoration={markings,mark=at position 0.25 with {\arrow[red]{<}}}] 
    \draw[postaction={decorate}] (S) -- (T);
\end{scope}

\begin{scope}[decoration={markings,mark=at position 0.75 with {\arrow[red,very thick]{<}}}] 
    \path[postaction={decorate}] (S) -- (T);
\end{scope}

\end{tikzpicture}
\caption{The PPU created by the reduction from the degenerate P-matroid extension $\ExtendedMatroid'$ is first completely unoriented. After orienting every unoriented edge downwards (red) as in \Cref{lem:completable}, it becomes a USO.}
\label[figure]{fig:USO_of_ExtendedMatroidDegenerate}
\end{figure}
\end{example}

\section{Constructions Based on Degeneracy and Perturbations}\label{sec:constructions}
In this section we show how existing constructions of oriented matroid extensions can be interpreted as constructions of (partial) P-matroid USOs.
An extension $\ExtendedMatroid$ of an oriented matroid $\Matroid$ can be uniquely described by a \emph{localization}~\cite[Section 7.1]{bjoerner1999orientedmatroids}, a function $\sigma$ from the set $\Circuits^*$ of cocircuits of $\Matroid$ to the set $\{-,0,+\}$. We gave some more background about localizations in \Cref{sec:OMPrelims}. Note that not every function $f:\Circuits^*\rightarrow\{-,0,+\}$ describes a valid extension and thus not every such function is a localization.
The following lemma connects a localization to the circuits relevant to the resulting (partial) P-matroid USO.

\begin{lemma}\label[lemma]{lem:localizationsAndOrientations}
    Let $\Matroid$ be a P-matroid and let $\sigma$ be a localization for $\Matroid$ describing the extension~$\ExtendedMatroid$. Then, for any complementary basis $B$ of $\Matroid$ (and thus also of $\ExtendedMatroid$), and every element $e\in B$, the sign of $e$ in the fundamental circuit $C(B,q)$ of $\ExtendedMatroid$ is the opposite of the sign assigned by $\sigma$ to the fundamental cocircuit $C^*(B,e)$ of  $\Matroid$.
\end{lemma}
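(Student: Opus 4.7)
The plan is to derive the claim from the orthogonality between circuits and cocircuits (\Cref{lem:orthogonal}) applied inside $\ExtendedMatroid$, using the first part of the localization construction in \Cref{def:localization} to identify a very convenient cocircuit of $\ExtendedMatroid$.

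First I would note that by \Cref{def:localization}, the extended cocircuits of the form $(Y,\sigma(Y))$ for $Y\in\Circuits^*$ always lie in $\ExtendedCircuits^*$. In particular, writing $D:=C^*(B,e)$, the signed set $\widehat D:=(D,\sigma(D))$ is a cocircuit of $\ExtendedMatroid$. I would then let $\widehat X:=C(B,q)$ be the fundamental circuit of interest and appeal to \Cref{lem:orthogonal} to conclude that $\widehat X$ and $\widehat D$ are orthogonal.

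The next step is to control the support intersection $\support{\widehat X}\cap\support{\widehat D}$. By the definition of the fundamental circuit we have $\support{\widehat X}\subseteq B\cup\{q\}$ and $\widehat X_q=+$, while by \Cref{def:fundamentalCocircuit} we have $\support{D}\cap(B\setminus\{e\})=\emptyset$ and $D_e=+$, so $\support{\widehat D}\cap B=\{e\}$. Consequently the intersection satisfies $\support{\widehat X}\cap\support{\widehat D}\subseteq\{e,q\}$, and $q$ lies in this intersection if and only if $\sigma(D)\neq 0$. I would then split into two cases: if $\sigma(D)=0$, the intersection has at most one element $e$, and since orthogonality requires either an empty intersection or at least two elements with opposing sign-products, we must have $\widehat X_e=0=-\sigma(D)$. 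If $\sigma(D)\neq 0$, then $q$ lies in the intersection, and a single-element intersection $\{q\}$ would again violate orthogonality, forcing $\widehat X_e\neq 0$ and hence the intersection to equal $\{e,q\}$. The orthogonality condition $\widehat X_e\widehat D_e=-\widehat X_q\widehat D_q$ together with $\widehat D_e=+$ and $\widehat X_q=+$ then yields $\widehat X_e=-\sigma(D)$, finishing the proof.

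The only subtle point I expect is the handling of the degenerate case $\sigma(D)=0$: one must justify that an orthogonality intersection of size exactly one cannot occur, so that $\widehat X_e=0$ is forced. This follows directly from the definition of orthogonality, but is worth spelling out so the reader sees that the $\sigma(D)=0$ case genuinely produces a degenerate (zero) entry in the fundamental circuit, matching the sign relation $\widehat X_e=-\sigma(D)$ uniformly across both cases.
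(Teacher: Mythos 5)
Your proof is correct and follows essentially the same approach as the paper: identify $\widehat D = (D,\sigma(D))$ as a cocircuit of $\ExtendedMatroid$ via the localization definition, bound $\support{C(B,q)}\cap\support{\widehat D}\subseteq\{e,q\}$, and read off the sign relation from \Cref{lem:orthogonal}. The only difference is expository --- you split on $\sigma(D)=0$ versus $\sigma(D)\neq 0$ while the paper splits on the two alternatives in the definition of orthogonality --- and your version makes more explicit the observation that an orthogonality support intersection of size exactly one is impossible, which the paper leaves implicit.
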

\begin{proof}
    $D:=C^*(B,e)$ is a cocircuit of $\Matroid$. By \Cref{def:localization}, $\widehat{D}:=(D,\sigma(D))$ must be a cocircuit of $\ExtendedMatroid$.
    
    By \Cref{def:fundamentalCocircuit}, $\support{\widehat{D}}$ must be a subset of $\ExtendedGroundSet\setminus (B\setminus\{e\})$ and $
    \widehat{D}_e=+$. On the other hand, the support of $C:=C(B,q)$ must be a subset of $B\cup\{q\}$, and $C_q=+$.

    \Cref{lem:orthogonal} says that $C$ and $\widehat{D}$ must be orthogonal, i.e., their supports either do not intersect, or they must agree and disagree on at least one element. Since $\support{C}\cap\support{\widehat{D}}\subseteq\{e,q\}$, the first case only occurs if $C_e=\widehat{D}_q=0$. The second case can only occur if $C_e=-\widehat{D}_q$, since $C_q=\widehat{D}_e=+$.
\end{proof}

Las Vergnas~\cite{lasvergnas1978extensions} showed that the set of localizations is closed under composition, i.e., given two localizations $\sigma_1,\sigma_2$, the following function is a localization too:

\[\forall c\in\mathcal{C}^*: (\sigma_1\circ \sigma_2)(c):=\begin{cases}
\sigma_1(c), & \text{if $\sigma_1(c)\not=0$},\\
\sigma_2(c), & \text{otherwise.}
\end{cases}\]

We now wish to understand the effect of such composition on the resulting (partial) P-matroid USO: For localizations $\sigma_1,\sigma_2$ and their corresponding PPUs $O_1,O_2$, we can see that the PPU $O'$ given by the localization $\sigma_1\circ\sigma_2$ is

\[\forall v\in V(Q_k), i\in [k]: O'(v)_i=\begin{cases}
O_1(v)_i, & \text{if } O_1(v)_i\not=0,\\
O_2(v)_i, & \text{otherwise.}
\end{cases}
\]
This follows from setting $c\in\Circuits^*$ to be $C^*(B,e)$ for $e$ being the element of $B(v)$ determining the orientation of $v$ in dimension $i$, and then applying \Cref{lem:localizationsAndOrientations}. We can see that this operation is equivalent to ``filling in'' all unoriented subcubes of $O_1$ with the orientation $O_2$.

Apart from compositions, Las Vergnas~\cite{lasvergnas1978extensions} also describes another construction technique for localizations, named lexicographic extensions.

\begin{definition}[Lexicographic extension~\cite{lasvergnas1978extensions}]\label[definition]{def:lexicographic}
    Let $\Matroid=(E,\Circuits)$ be an oriented matroid. Given an element $e\in E$ and a sign $s\in\{-,0,+\}$, the function $\sigma:\Circuits^*\rightarrow\{-,0,+\}$ given by
    
    \[\sigma(D):=\begin{cases}
    s\cdot D_e, & \text{ if } D_e\not=0,\\
    0, & \text{ otherwise,}
    \end{cases}
    \]
    is a localization. The extension of $\Matroid$ specified by this localization is called the \emph{lexicographic extension} of $\Matroid$ by $[s\cdot e]$.
\end{definition}

In the next lemma we prove that lexicographic extensions of uniform P-matroids give rise to PPUs in which the unoriented half-edges are exactly the half-edges in some facet (i.e., a face of dimension $n-1$). See \Cref{fig:lexicographicextension} for an example PPU of this form.

\begin{figure}
    \centering
    \includegraphics{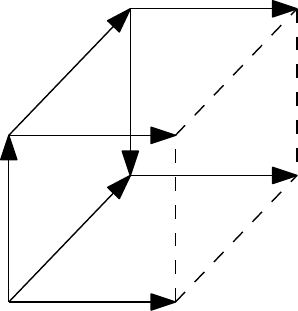}
    \caption{The form of the PPU given by a lexicographic extension of a uniform P-matroid.}
    \label{fig:lexicographicextension}
\end{figure}

\begin{lemma}\label[lemma]{lem:lexicographicextensions}
    Let $\Matroid=(E_{2n},\Circuits)$ be a P-matroid. Let $\ExtendedMatroid$ be the lexicographic extension of $\Matroid$ by $[-\cdot t_i]$. Then, in the partial P-matroid USO $O$ defined by $\ExtendedMatroid$, for any $v$ with $v_i=1$ we have $O(v)_j=0$ for $j\neq i$, but $O(v)_i=-$. In other words, the upper $i$-facet (the facet formed by the vertices $v$ with $v_i=1$) is a maximal unoriented subcube. Furthermore, this facet is a hypersink, i.e., a hypervertex with only incoming edges. Finally, if $\Matroid$ is uniform, then there are no other unoriented half-edges, i.e., $O(v)_j\neq 0$ for all $v$ with $v_i=0$ and all $j$.
\end{lemma}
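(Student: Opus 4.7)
The plan is to turn each half-edge orientation into a single evaluation of the localization $\sigma$, and then read off all three claims from a short case analysis. Let $e_{v,j}$ denote the unique element of $B(v)$ lying in the $j$-th complementary pair, i.e.\ $e_{v,j}=s_j$ if $v_j=0$ and $e_{v,j}=t_j$ if $v_j=1$. By the Klaus-reduction rule for $O(v)_j$ (which reads off the sign of the one of $s_j,t_j$ lying in $B(v)$), we have $O(v)_j = -\,C(B(v),q)_{e_{v,j}}$, interpreting negation on $\{-,0,+\}$ in the natural way. Combining this with \Cref{lem:localizationsAndOrientations} and \Cref{def:lexicographic} for the lexicographic extension by $[-\cdot t_i]$ yields the clean identity
\[
O(v)_j \;=\; \sigma\bigl(C^*(B(v),e_{v,j})\bigr) \;=\; -\,C^*(B(v),e_{v,j})_{t_i}.
\]

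With this identity in hand, the first two parts follow from the defining properties of fundamental cocircuits in \Cref{def:fundamentalCocircuit}. If $v_i=1$ and $j=i$, then $e_{v,j}=t_i$, so $C^*(B(v),t_i)_{t_i}=+$ and hence $O(v)_i=-$. If $v_i=1$ and $j\neq i$, then $e_{v,j}\neq t_i$ while $t_i\in B(v)\setminus\{e_{v,j}\}$, so $C^*(B(v),e_{v,j})_{t_i}=0$ and $O(v)_j=0$. Together these say that every $v$ in the upper $i$-facet has $O(v)_i=-$ and $O(v)_j=0$ for $j\neq i$: all internal half-edges of the facet are unoriented, all half-edges in dimension $i$ at its vertices point inward, so the facet is simultaneously an unoriented subcube, a hypervertex (the only dimension not spanned is $i$, and all vertices agree on $O(\cdot)_i=-$), and a hypersink. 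Maximality of the unoriented subcube is then immediate: any strictly larger face must span dimension $i$, and the dimension-$i$ half-edges at its vertices in the upper $i$-facet are oriented.

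For the uniformity claim, fix $v$ with $v_i=0$, so that $s_i\in B(v)$ and $t_i\notin B(v)$, and let $j$ be arbitrary. In a uniform oriented matroid of rank $n$ on $|E_{2n}|=2n$ elements, every cocircuit has support of size $n+1$; since $C^*(B(v),e_{v,j})$ has support containing $e_{v,j}$ and disjoint from $B(v)\setminus\{e_{v,j}\}$, its support is forced to be exactly $\{e_{v,j}\}\cup(E_{2n}\setminus B(v))$, which has the right size. In particular, $t_i\in E_{2n}\setminus B(v)$ lies in the support, so $C^*(B(v),e_{v,j})_{t_i}\neq 0$ and thus $O(v)_j\neq 0$.

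The technical core is really just the translation in the first paragraph; after it, the two $v_i=1$ cases are forced by the support restrictions built into \Cref{def:fundamentalCocircuit}, and the uniform case by the cocircuit-support count. The main place I expect to need a little extra care in the full write-up is justifying the full-support property of fundamental cocircuits in uniform oriented matroids of the given rank; this is a standard consequence of cocircuits being the complements of hyperplanes in a matroid of rank $n$, but it is worth an explicit citation to Björner et al.\ rather than being asserted bare.
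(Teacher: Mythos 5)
Your proof is correct and takes essentially the same route as the paper: translate the Klaus orientation rule through \Cref{lem:localizationsAndOrientations} and \Cref{def:lexicographic}, then read each case off the support constraints of \Cref{def:fundamentalCocircuit} (and the size-$(n+1)$ cocircuit supports in the uniform case). Packaging the computation into the single upfront identity $O(v)_j=-C^*(B(v),e_{v,j})_{t_i}$ before the case split is a nice presentational touch, but the underlying steps match the paper's, which handles the same facts cocircuit-by-cocircuit and additionally spells out $O(w)_i=+$ on the lower $i$-facet via \Cref{lem:pmatroidszabowelzl}.
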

\begin{proof}
    We first prove that all edges in dimension $i$ are oriented from the vertices with $v_i=0$ to the vertices with $v_i=1$.
    As $t_i$ is positive in all cocircuits $C^*(B,t_i)$ for $B$ such that $t_i\in B$ (i.e., $B=B(v)$ for $v$ with $v_i=1$), $\sigma$ assigns $-$ to all such cocircuits. By \Cref{lem:localizationsAndOrientations}, we thus have $C(B,q)_{t_i}=+$, and we see that $O(v)_i=-$ for all $v$ with $v_i=1$. Note that by \Cref{lem:pmatroidszabowelzl} we therefore have $O(w)_i=+$ for all $w$ with $w_i=0$.

    Next, we prove that all edges in the upper $i$-facet are unoriented. Let $B$ be a basis with $t_i\in B$, and let $e\in B\setminus\{t_i\}$ be some element. Now, note that $t_i\not\in\support{C^*(B,e)}$. Thus, $\sigma$ assigns $0$ to these circuits, and therefore $C(B,q)_e=0$, showing that this facet is unoriented.

    Lastly, we prove that if $\Matroid$ is uniform, the facet of vertices $v$ with $v_i=0$ is completely oriented. When $\Matroid$ is uniform, all subsets $B\subset E$ of size $n$ are bases and cobases. Thus, $|\support{C^*(B,e)}|=n+1$, and for every complementary $B$ such that $s_i\in B$ and any $e\in B$, we have that $t_i\in\support{C^*(B,e)}$ and therefore $\sigma$ assigns a non-zero sign to that circuit. This shows that $|\support{C(B,q)}|=n+1$ too, proving that all edges around a vertex $v$ with $v_i=0$ are oriented. 
\end{proof}
Of course this lemma symmetrically also applies to lexicographic extensions where~$s=+$ or $e=s_i$. Switching $t_i$ out for $s_i$ swaps the role of the two facets, and switching the sign makes the unoriented facet a hyper\emph{source} instead of a hypersink.

We can use these two construction techniques to prove a lower bound on the number of queries to the function $O$ required by deterministic sink-finding algorithms on P-matroid USOs. In essence, we successively build a localization by composition with (negative) lexicographic extensions. The construction keeps the invariant that there exists a still unqueried unoriented subcube that is guaranteed to contain the global sink. The dimension of this subcube is reduced by at most one with every query, thus at least $n$ queries are required.

\begin{theorem}\label[theorem]{thm:POMCPlower}
Let $\Matroid=(E_{2n},\Circuits)$ be a uniform P-matroid. Then, for every deterministic sink-finding algorithm $\mathcal{A}$, there exists a non-degenerate extension $\ExtendedMatroid$ of $\Matroid$ such that $\mathcal{A}$ requires at least $n$ queries to $O$ to find the sink of the P-matroid USO $O$ given by $\ExtendedMatroid$.
\end{theorem}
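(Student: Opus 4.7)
The plan is to run an adversary argument against any deterministic algorithm $\mathcal{A}$: the adversary lazily builds a non-degenerate P-matroid extension of $\Matroid$ as a composition $\sigma^{(k)} = \sigma_1 \circ \cdots \circ \sigma_k$ of lexicographic extensions of $\Matroid$, adding one factor per query. Setting $f_0 := V(Q_n)$, the invariant maintained after $k \ge 0$ queries is that $\sigma^{(k)}$ corresponds (via \Cref{lem:localizationsAndOrientations}) to a PPU whose unoriented half-edges are exactly the half-edges lying inside a single face $f_k$ of dimension $n-k$, and that $f_k$ is a hypersink of the already-committed orientation. Since the local sink of a hypersink face is necessarily the global sink of any enclosing USO, every completion of this PPU to a USO places the sink inside $f_k$, so as long as $f_k$ has positive dimension $\mathcal{A}$ cannot yet determine the sink.

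On the $k$-th query $v$: if $v \notin f_{k-1}$, the adversary returns the already-fixed value $O(v)$ under $\sigma^{(k-1)}$. Otherwise, pick any dimension $i$ in which $f_{k-1}$ is not constant and set $\sigma_k := [-\cdot t_i]$ if $v_i = 0$ and $\sigma_k := [-\cdot s_i]$ if $v_i = 1$. Uniformity of $\Matroid$ and \Cref{lem:lexicographicextensions} (with its symmetric variant for $s_i$) then ensure that the standalone PPU of $\sigma_k$ orients every half-edge of $Q_n$ outside a single unoriented hypersink $i$-facet, and by the choice of sign/element this facet is exactly the half of $Q_n$ along dimension $i$ that does not contain $v$. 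Las Vergnas' composition rule combined with \Cref{lem:localizationsAndOrientations} then shows that the PPU of $\sigma^{(k)} = \sigma^{(k-1)} \circ \sigma_k$ agrees with that of $\sigma^{(k-1)}$ wherever the latter was oriented, and uses $\sigma_k$'s orientations inside $f_{k-1}$. A short bookkeeping argument gives that the new unoriented face $f_k$ is precisely the half of $f_{k-1}$ along dimension $i$ opposite $v$, has dimension $n-k$, and inherits the hypersink property (externally from the old hypersink structure of $f_{k-1}$, and internally in dimension $i$ from $\sigma_k$). The adversary then returns $O(v)$ computed under $\sigma^{(k)}$.

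Suppose $\mathcal{A}$ halts after $k \le n-1$ queries and outputs a vertex $v^*$. The adversary appends further lexicographic extensions in the same manner, always slicing so that $v^*$ ends up on the oriented side, until the terminal face $f_n$ is a single vertex $u$. If $v^* \in f_k$, the first such slice puts $v^*$ outside $f_{k+1}$; otherwise $v^* \notin f_k$ already, so in either case $u \in f_n \subseteq f_{k+1} \not\ni v^*$ and hence $u \ne v^*$. After these $n$ slices, \Cref{lem:unorientedsubcubes} forces the PPU to have no unoriented half-edges at all (a dimension-$0$ face contains no half-edges), so by \Cref{lem:completable,lem:pmatroidszabowelzl} it is already a USO, and the hypersink property identifies $u$ as its unique global sink. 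The absence of unoriented half-edges also matches the non-degeneracy condition for the extension $\ExtendedMatroid$ corresponding to $\sigma^{(n)}$, as discussed in \Cref{sec:ClassicReduction}. Thus $v^* \ne u$ is wrong on the constructed non-degenerate P-matroid extension, establishing the $n$-query lower bound. The main obstacle is the inductive bookkeeping for the invariant: showing that composing a fresh lex extension with $\sigma^{(k-1)}$ restricts the update cleanly to $f_{k-1}$, which ultimately reduces to uniformity of $\Matroid$ forcing each standalone lex-extension PPU to be non-degenerate outside its designated facet.
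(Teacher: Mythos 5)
Your proposal is correct and follows essentially the same adversary argument as the paper: iteratively compose lexicographic extensions $[-\cdot t_i]$ or $[-\cdot s_i]$ so that the unoriented face stays a hypersink, shrinks by one dimension per in-face query, and must contain the global sink. You are in fact somewhat more careful than the paper's writeup in spelling out the endgame — continuing to slice after the algorithm halts so that $v^*$ is excluded, and observing that the resulting $\sigma^{(n)}$ has no unoriented half-edges and hence yields a non-degenerate extension — and you avoid the paper's stray remark attributing the hypersink invariant to ``$s=+$'' when the sign used is in fact $-$.
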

\begin{proof}
We specify an adversarial procedure which iteratively builds up a localization $\sigma$ for $\Matroid$ using compositions. At any point of this procedure, the current localization describes an extension $\ExtendedMatroid$ for which the PPU $O$ contains exactly one unoriented subcube $U$ that forms a hypersink, i.e., all edges incident to $U$ are oriented into $U$. Thus, the global sink of $O$ must lie in $U$, but its exact location has not been determined yet. Note that since we build up $\sigma$ using compositions, once a half-edge is oriented, its orientation never changes anymore. Thus, we only need to ensure that after a vertex $v$ is queried, $\sigma$ is extended such that $O(v)$ contains no zero entries.

At the beginning of the procedure, $\sigma$ is set to be all-zero, and thus by \Cref{lem:localizationsAndOrientations}, $O$ is completely unoriented and the unoriented subcube $U$ is the whole cube. Now, whenever the sink-finding algorithm queries a vertex $v$, we must be able to return $O(v)$ with no zero entries. If $v$ lies outside of $U$, it is already completely oriented, and $O(v)$ can simply be returned. Otherwise, if $v$ lies in $U$, the localization $\sigma$ has to be changed. To do this, we pick any dimension $i$ which spans $U$. If $v_i=0$, we change $\sigma$ to $\sigma' := \sigma \circ [-\cdot t_i]$, i.e., $\sigma$ is composed with the lexicographic extension $[-\cdot t_i]$. On $O$ this has the effect that some currently unoriented edges in $U$ become oriented. By \Cref{lem:lexicographicextensions}, in the PPU induced by $[-\cdot t_i]$ and thus also in $O$, all edges in the lower $i$-facet of $U$ are oriented, and thus $O(v)$ now contains no more zero entries. By doing this, $U$ has shrunk by one dimension (namely $i$). The invariant that $U$ is a hypersink is preserved, since the lexicographic extension we used was positive ($s=+$). Note that if instead we had $v_i=1$, we would use the lexicographic extension $[-\cdot s_i]$, with the rest of the argument staying the same.

Since $U$ shrinks by only one dimension with every query, and $U$ has $n$ dimensions at the beginning, the first $n$ vertices queried by the algorithm are never the sink. Thus, it takes at least $n$ queries to $O$ to determine the sink.
\end{proof}

Previously, the best known lower bound for sink-finding on P-matroid USOs was $\Omega(\log n)$ queries~\cite{weber2022matousekgap}. In contrast, the stronger, almost-quadratic lower bound of Schurr and \szabo{}~\cite{schurr2004quadraticbound} does not apply to P-matroid USOs (for a proof of this see \Cref{lem:schurrnonpmatroid} in \ref{sec:SchurrSzaboNonHoltKlee}).

\subsection{P-LCP} As mentioned above, the \POMCP is a combinatorial abstraction of the \acrfull*{PLCP}. In the \PLCP, one is given an $n\times n$-matrix $M$ which is promised to be a P-matrix (i.e., a matrix in which every principal minor is positive), as well as a vector $q$. The task is to find vectors $w,z$ fulfilling $w-Mz=q$, as well as the non-negativity condition $w,z\geq 0$ and the complementarity condition $w^Tz=0$. Usually, $q$ is assumed to be non-degenerate for $M$, meaning that there are no solutions to $w-Mz=q$ for which $w_i=z_i=0$ for any $i$. For a more thorough introduction to the P-LCP we refer the interested reader to the comprehensive textbook by Cottle, Pang and Stone~\cite{LCPBuch}.

It is well-known that a non-degenerate \PLCP instance $(M,q)$ can be translated to a non-degenerate \POMCP instance~\cite{klaus2012phd}. This is achieved by considering the matrix $[I; -M; -q]$, associating the first $n$ columns (those of $I$) with $S$, the next $n$ columns (those of $-M$) with $T$, and the last column with element $q$. The circuits of the P-matroid extension $\ExtendedMatroid$ are then given by the signs of the coefficients of the minimal linear dependencies of these column vectors (see \cite{bjoerner1999orientedmatroids} for a formal description of this translation of matrices to oriented matroids). Non-degenerate \PLCP{}s can thus be reduced to sink-finding in USOs (by this detour through the \POMCP, but also more directly by an equivalent reduction~\cite{stickney1978digraph}). USOs that are obtained through this reduction are called \emph{P-cubes} (forming a proper subclass of the P-matroid USOs) and have been studied intensely since faster sink-finding on P-cubes would imply algorithmic improvements for solving \PLCP{}s, linear programs, and many more optimization problems~\cite{foniok2009pivoting,foniok2014counting,fukuda2013subclass,gao2020dcubes,gaertner2006lpuso,klaus2012phd,klaus2015enumerationplcp,morris2002distinguishing,morris2002pivotalgo,ruest2007phd,weber2021matousek,weber2022matousekgap}. Degenerate \PLCP{}s yield degenerate \POMCP{}s, and we call their corresponding partial orientations \emph{partial P-cubes}.

All of our results in \Cref{sec:structural,sec:constructions} also hold in the context of \PLCP{}s. The structural results from \Cref{sec:structural} naturally hold for partial P-cubes, since they are a (proper) subset of the partial P-matroid USOs. The constructions from \Cref{sec:constructions} can also be translated to work on \PLCP{}s. Different extensions of a P-matroid can be seen as different vectors $q$ for the same P-matrix $M$. The composition of localizations can be replaced by the composition of vectors $q$, computed as $q':=q_1+\epsilon\cdot q_2$ for some $\epsilon>0$ chosen to be small enough that no sign of a non-zero element of any candidate solution (i.e., a pair of vectors $w,z$ fulfilling only $w-Mz=q$) is flipped. The lexicographic extensions correspond to setting $q$ to be equal to vectors $e_i,-e_i,M_i,$ or $-M_i$. Armed with these constructions equivalent to both compositions and lexicographic extensions, we can restate \Cref{thm:POMCPlower} in the context of the \PLCP:
\begin{theorem}\label{thm:PLCPlower}
Let $M$ be an $n\times n$ P-matrix such that the matrix $[I; -M]$ has no linear dependencies of fewer than $n+1$ columns. Then, for every deterministic sink-finding algorithm $\mathcal{A}$, there exists a vector $q$ such that $\mathcal{A}$ requires at least $n$ queries to find the sink of the P-cube given by the \PLCP instance $(M,q)$.
\end{theorem}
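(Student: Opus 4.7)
The plan is to translate the proof of \Cref{thm:POMCPlower} into the \PLCP language using the dictionary described in the paragraph immediately preceding the theorem. First I would verify that the hypothesis on $M$ --- namely that $[I;\,-M]$ has no linear dependencies of fewer than $n+1$ columns --- is exactly the condition needed to make the P-matroid $\Matroid$ induced by $M$ uniform. Since the columns of $[I;\,-M]$ live in $\Reals^n$, this condition is equivalent to saying every $n$-subset of columns is linearly independent, i.e.\ every complementary $n$-set is a basis of $\Matroid$, so that \Cref{thm:POMCPlower} is applicable at the level of oriented matroids.

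Next I would spell out the translation of the two construction primitives. A lexicographic extension $[-\cdot t_i]$ corresponds, under the embedding $s_i\leftrightarrow e_i$ and $t_i\leftrightarrow -M_i$, to setting the new column $q$ to be an infinitesimal positive perturbation of $M_i$; symmetrically $[-\cdot s_i]$ corresponds to $q=-e_i$. Composition of localizations corresponds to the composition rule $q_1\circ q_2:=q_1+\epsilon q_2$ for a sufficiently small $\epsilon>0$. With these translations in hand, the adversarial game from the proof of \Cref{thm:POMCPlower} can be replayed almost verbatim: start from $q^{(0)}:=0$ (corresponding to the all-zero localization and the totally unoriented PPU), maintain an unoriented hypersink subcube $U$ guaranteed to contain the eventual sink, and on the $k$-th query of a vertex $v\in U$ pick a dimension $i$ spanning $U$ and update $q^{(k)}:=q^{(k-1)}+\epsilon_k q_k$ where $q_k=-e_i$ if $v_i=0$ and $q_k=M_i$ if $v_i=1$. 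By \Cref{lem:lexicographicextensions}, this extension orients all half-edges around $v$ that lie outside $U$, shrinks $U$ by exactly the dimension $i$, and keeps $U$ a hypersink. After $n$ queries the sink has still not been pinned down, giving the lower bound.

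The one nontrivial technical point that I expect to dominate the proof is choosing the $\epsilon_k$'s consistently so that the final vector $q:=q^{(n)}$ is a single non-degenerate vector in $\Reals^n$ whose induced P-cube actually matches, at every queried vertex, the partial orientation produced by the adaptive game. The classical trick is to pick the $\epsilon_k$'s strictly decreasingly and much smaller than all previously committed magnitudes, or to work formally in an ordered field of rational functions in symbols $\epsilon_1,\ldots,\epsilon_n$ and specialize at the end to concrete positive reals $\epsilon_1\gg\epsilon_2\gg\cdots\gg\epsilon_n>0$. One must check that (i) at each step, $\epsilon_k$ can be chosen small enough that no sign of any nonzero entry of any basic solution $w-Mz=q^{(k-1)}$ (with complementary support) is flipped by the update, which is a finite set of strict linear inequalities and hence satisfiable; (ii) the final $q$ is non-degenerate for $M$, which follows from the uniformity hypothesis together with the fact that each lexicographic primitive produces a non-degenerate perturbation; and (iii) the resulting P-cube and the partial P-cube built during the game agree on every already-oriented half-edge, which is automatic from the monotonicity of composition. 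With these verifications in place, the $n$-query lower bound transfers directly from \Cref{thm:POMCPlower}, yielding the stated theorem.
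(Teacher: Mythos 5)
Your overall strategy matches the paper's intent exactly: the paper proves \Cref{thm:PLCPlower} only implicitly, by saying that the translation dictionary in the paragraph before the theorem lets one ``restate'' the proof of \Cref{thm:POMCPlower} in \PLCP language, and you are filling in the details of that translation. Your observation that the hypothesis on $[I;\,-M]$ (every $n$-subset of columns linearly independent) is precisely uniformity of the underlying P-matroid is correct and is the right entry point, and your remarks on choosing the $\epsilon_k$'s (decreasing scales or formal infinitesimals, plus checking that each update preserves previously committed signs) are a legitimate and necessary piece of bookkeeping that the paper glosses over.

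However, the concrete $q$-vectors you assign to the lexicographic extensions are wrong, and this would break the adversarial argument. Under the embedding where the oriented-matroid element $q$ is realized by the column $-q$ of $[I;\,-M;\,-q]$, the lexicographic extension $[s\cdot e]$ places the new element near $s\cdot v_e$; hence $[-\cdot t_i]$ (with $v_{t_i}=-M_i$) corresponds to the new column $\approx M_i$, i.e.\ the \emph{\PLCP} vector $q\approx -M_i$, and $[-\cdot s_i]$ corresponds to $q\approx e_i$. In the adversary of \Cref{thm:POMCPlower} a query at $v$ with $v_i=0$ triggers $[-\cdot t_i]$, so the correct update is $q_k=-M_i$ when $v_i=0$ and $q_k=e_i$ when $v_i=1$. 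You instead use $q_k=-e_i$ when $v_i=0$ and $q_k=M_i$ when $v_i=1$, which is off in both the $e$-vs-$M$ choice and the sign; note also that your algorithm does not even match your own earlier dictionary ($[-\cdot t_i]\leftrightarrow M_i$, $[-\cdot s_i]\leftrightarrow -e_i$). This is not merely cosmetic: with $q_k=-e_i$ the induced extension is $[+\cdot s_i]$, for which \Cref{lem:lexicographicextensions} (applied with the stated symmetry) makes the \emph{lower} $i$-facet an unoriented hypersource and leaves the queried vertex $v$ (which has $v_i=0$) still degenerate, so the adversary cannot answer the query with a full orientation, and the invariant that the unqueried subcube $U$ is a hypersink is also destroyed (it would become a hypersource). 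One can check this concretely: for $n=2$, $M=\begin{pmatrix}2&1\\1&2\end{pmatrix}$, and $q=-e_1$, the vertex $00$ has $O(00)_2=0$, whereas with $q=-M_1$ the lower $1$-facet is fully oriented and the upper $1$-facet is the unoriented hypersink, exactly as needed. With the corrected $q_k$'s, the rest of your argument goes through.
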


\section{The Search Problem Complexity of \texorpdfstring{\POMCP}{P-OMCP}}\label{sec:searchcomplexity}

\POMCP and \USO as we considered them in the previous sections are promise search problems.
In general, we would like to get meaningful output even if the promise does not hold.
Unfortunately, as discussed in \Cref{sec:preliminaries}, we cannot efficiently decide whether the promises hold, since they are both \coNP-hard to check. 
Therefore, we define total search versions for both problems.

A total search problem is a search problem for which a solution always exists.
The complexity class \TFNP (Total Function Problems in \NP), introduced by Megiddo and Papadimitriou~\cite{megiddo1991tfnp}, captures such total search problems for which any candidate solution is verifiable in polynomial time.

In order to make our promise problems total, we introduce \emph{violation solutions}, exploiting the simple falsifiability (i.e., \coNP-containment) of the promises. A violation solution is a polynomial-time verifiable certificate proving that the promise does not hold. Given an input instance, we then wish to either find a valid solution (i.e., a solution of the type we search for in the promise problem), or a violation solution if the promise does not hold. Thus, there always exists a solution and the problem is total. We would like to note that even if the promise does not hold a valid solution may exist and may be returned to solve the total search problem.

\begin{definition}[{\totalUSO}]
Given a boolean circuit computing an orientation function $\orientation\colon \{0, 1\}^\dimension \rightarrow \{+, -\}^\dimension$ of a hypercube, find one of the following.

\begin{itemize}[leftmargin=1.2cm]
\item[\USOSolTypeEndOfLine] A sink, i.e., a vertex \f{\nodeA \in \{0, 1\}^\dimension} such that \f{\orientation(\nodeA)_i = -} for all $i\in[n]$.

\item[\USOVioCondition] Two distinct vertices \f{\nodeA, \nodeB \in \{0, 1\}^\dimension} with either
$\nodeA_i = \nodeB_i$ or $O(\nodeA)_i = O(\nodeB)_i$ for all $i\in[n]$, showing that the orientation \orientation does not fulfill the \SWC and thus is not USO. 
\end{itemize}
\end{definition}

Next, we wish to restate \POMCP as total search problem. 
Observe that a simple polynomial-time verifiable certificate for a given oriented matroid not being a P-matroid is simply a circuit $Z \in \Circuits$ which is sign-reversing.
Nonetheless, we define \totalPOMCP with more than one violation solution type. 
This is due to the fact that we want to reduce \totalPOMCP to \totalUSO, and the reduction may only detect violations of type \MatroidVioNoBasis and \MatroidVioPMatroidImplicit.
We do not know of an efficent way of transforming a violation of one of these types to a violation of type \MatroidVioPMatroid.
Note that as Fearnley et al. \cite{UEOPL2020} pointed out, there may be a difference in the complexity of a total search problem depending on the violations chosen.

\begin{definition}[{\totalPOMCP}]\label[definition]{def:TotalPOMCP}
Let  $\ExtendedMatroid = (\ExtendedGroundSet, \ExtendedCircuits)$ be an oriented matroid.
The task of the total search problem \totalPOMCP is to find one of the following.
\begin{itemize}[leftmargin=1.2cm]
\item[\MatroidSolTypeEndOfLine] A circuit $\CircuitA \in \ExtendedCircuits$ such that
$\CircuitA^-=\emptyset$,
$\CircuitA_\Vector = +$ and
$\forall i \in [n] : \CircuitA_{s_i} \CircuitA_{t_i} = 0.$

\item[\MatroidVioPMatroid] A circuit $\CircuitC \in \Circuits$ which is sign-reversing.

\item[\MatroidVioNoBasis] A complementary set $B \subset E_{2n}$ of size $n$ which is not a basis of \ExtendedMatroid.

\item[\MatroidVioPMatroidImplicit] Two distinct, complementary circuits $\CircuitA, \CircuitB \in \Circuits$ with $\CircuitA_q = \CircuitB_q = +$ such that for all $i \in [n]$:
\begin{itemize}[ label=$\bullet$]
    \item $X_{s_i}Y_{t_i}=X_{t_i}Y_{s_i}=0$ or 
    \item $X_{s_i}=Y_{t_i}$ and $X_{t_i}=Y_{s_i}$.
\end{itemize}
\end{itemize}
\end{definition}

Clearly, a solution of type \MatroidVioPMatroid or \MatroidVioNoBasis implies that \ExtendedMatroid is not a P-matroid extension by definition and by the fact that every complementary set of size $n$ is a basis in a P-matroid extension~\cite{klaus2012phd}. The definition of the violation \MatroidVioPMatroidImplicit on the other hand may look rather unintuitive, but the following lemma shows that it correctly implies that $\ExtendedMatroid$ is not a P-matroid extension.

\begin{lemma}\label[lemma]{lem:VioImpliesNotPMatroid}
A violation of type \MatroidVioPMatroidImplicit implies that $\ExtendedMatroid$ is not a P-matroid extension.   
\end{lemma}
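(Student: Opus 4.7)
The plan is to build a sign-reversing circuit of the underlying matroid \Matroid by applying the circuit elimination axiom (C3) to \CircuitA and $-\CircuitB$, eliminating \Vector. The preconditions of (C3) are satisfied: $\CircuitA_\Vector = \CircuitB_\Vector = +$ gives $\Vector \in \CircuitA^+ \cap (-\CircuitB)^-$, and $\CircuitA \neq -(-\CircuitB) = \CircuitB$ follows from the distinctness of \CircuitA and \CircuitB. Axiom (C3) therefore yields a signed set $\CircuitC \in \ExtendedCircuits$ with $\Vector \notin \support{\CircuitC}$ and with support bounds $\CircuitC^+ \subseteq (\CircuitA^+ \cup \CircuitB^-) \setminus \{\Vector\}$ and $\CircuitC^- \subseteq (\CircuitA^- \cup \CircuitB^+) \setminus \{\Vector\}$. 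Since \Vector is absent from $\support{\CircuitC}$, \CircuitC is a circuit of the minor $\ExtendedMatroid \setminus \Vector = \Matroid$, so $\CircuitC \in \Circuits$.

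It then suffices to verify that \CircuitC is sign-reversing, since \Cref{def:Pmatroid} immediately rules out \Matroid being a P-matroid, and hence \ExtendedMatroid being a P-matroid extension. I would fix $i \in [n]$ and split on the two alternatives of (MV3). In the first alternative, $\CircuitA_{\sets_i}\CircuitB_{\sett_i} = \CircuitA_{\sett_i}\CircuitB_{\sets_i} = 0$, I would argue that at most one of $\CircuitC_{\sets_i}, \CircuitC_{\sett_i}$ can be nonzero. The reasoning is short: $\CircuitC_{\sets_i} \neq 0$ forces $\CircuitA_{\sets_i} \neq 0$ or $\CircuitB_{\sets_i} \neq 0$ by the (C3) bounds; combining complementarity of \CircuitA and \CircuitB with the first alternative then drives $\CircuitA_{\sett_i} = \CircuitB_{\sett_i} = 0$, leaving no witness in (C3) for a nonzero $\CircuitC_{\sett_i}$.

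In the second alternative, $\CircuitA_{\sets_i} = \CircuitB_{\sett_i}$ and $\CircuitA_{\sett_i} = \CircuitB_{\sets_i}$. Up to symmetries swapping $\sets_i$ with $\sett_i$ and flipping signs, I may assume $\CircuitA_{\sets_i} = +$; complementarity of \CircuitA forces $\CircuitA_{\sett_i} = 0$, hence $\CircuitB_{\sets_i} = 0$ and $\CircuitB_{\sett_i} = +$. Reading off the (C3) bounds then pins $\CircuitC_{\sets_i}$ to $\{+, 0\}$ (only \CircuitA can witness it, and only positively) and $\CircuitC_{\sett_i}$ to $\{-, 0\}$, so $\CircuitC_{\sets_i} = -\CircuitC_{\sett_i}$ whenever both are nonzero. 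Combining both alternatives, for every $i$ either $\{\sets_i, \sett_i\} \not\subseteq \support{\CircuitC}$ or $\CircuitC_{\sets_i} = -\CircuitC_{\sett_i}$, which is precisely the sign-reversing property.

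The main obstacle will be presenting this case analysis cleanly: the interplay of (MV3), the complementarity of \CircuitA and \CircuitB, and the one-sided nature of the (C3) support bounds creates several small sub-cases that need to be handled without wasteful duplication. Once the sign-nonzero patterns of $(\CircuitA_{\sets_i}, \CircuitA_{\sett_i}, \CircuitB_{\sets_i}, \CircuitB_{\sett_i})$ compatible with the hypotheses are enumerated, each sub-case reduces to a single-line check against the (C3) bounds, and the lemma follows.
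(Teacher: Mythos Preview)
Your proposal is correct and follows essentially the same approach as the paper: apply (C3) to $\CircuitA$ and $-\CircuitB$ to eliminate $\Vector$, and argue that the resulting circuit $\CircuitC\in\Circuits$ is sign-reversing. The only organizational difference is that the paper first isolates the case where $\support{\CircuitC}$ contains no complementary pair and dispatches it via an \MatroidVioNoBasis-type argument, whereas your index-by-index split on the two \MatroidVioPMatroidImplicit alternatives handles that case automatically (a circuit whose support contains no complementary pair is vacuously sign-reversing), which is a mild simplification.
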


\begin{proof}
Suppose we are given such a violation, i.e., two distinct complementary circuits $\CircuitA, \CircuitB \in \Circuits$ with $\CircuitA_q = \CircuitB_q = +$ and $\forall i \in [n] : X_{s_i}Y_{t_i}=X_{t_i}Y_{s_i}=0$ or $X_{s_i}=Y_{t_i}$ and $X_{t_i}=Y_{s_i}$.

As $X,Y$ are distinct, $X\neq Y$. Since $X_q=Y_q=+$, it holds that $X\neq -Y$. We can thus apply circuit axiom (C3) on circuits $X$ and $-Y$ and element $q\in X^+\cap (-Y)^-$.
It follows that there must exist some circuit \CircuitC with: 
\begin{itemize}
	\item $\CircuitC^+ \subseteq X^+ \cup (-Y)^+ \setminus \{q\}$ and 
	\item $\CircuitC^- \subseteq X^- \cup (-Y)^- \setminus \{q\}$.
\end{itemize}
Suppose $\support{Z}$ contained no complementary pair, then it is a complementary set. Any complementary set $B\supseteq\support{Z}$ of size $n$ cannot be a basis, since $\support{Z}$ is a circuit. This is a violation of type \MatroidVioNoBasis and implies that $\ExtendedMatroid$ is not a P-matroid extension.

Thus, $\support{Z}$ must contain at least one complementary pair $s_i,t_i$. 
As $X$ and $Y$ are complementary, $s_i$ and $t_i$ are each only contained in one of the two circuits, w.l.o.g. $s_i\in\support{X}$ and $t_i\in\support{Y}$. Therefore, $s_i$ and $t_i$ are each only contained in one of the two sets \mbox{$X^+\cup (-Y)^+\setminus\{q\}$} and $X^-\cup(-Y)^-\setminus\{q\}$. Since $X_{s_i}=Y_{t_i}$, they are both in different sets, and thus $Z_{s_i}=-Z_{t_i}$.
Since this holds for every complementary pair in $\support{Z}$, we conclude that $Z$ is sign-reversing. Thus $Z$ is a violation of type \MatroidVioPMatroid, and $\ExtendedMatroid$ can not be a P-matroid extension.

Note that even if we cannot find \CircuitC explicitly in polynomial time, we can check the conditions on \CircuitA and \CircuitB in polynomial time.
\end{proof}

With the help of \Cref{lem:completable,lem:pmatroidszabowelzl} we now adapt Klaus' reduction of non-degenerate \POMCP instances to \USO (recall \Cref{sec:ClassicReduction}) to also work with degenerate instances and their respective total search versions.

Given a \totalPOMCP instance $\ExtendedMatroid = (\ExtendedGroundSet, \ExtendedCircuits)$ (note that $\ExtendedMatroid$ is possibly not a P-matroid extension, or degenerate), 
we associate every vertex $v$ of the cube with a complementary basis $B(v)\subset S\cup T$. 
If $B(v)$ is not even a basis due to \ExtendedMatroid not being a P-matroid extension, we simply immediately make $v$ a sink.
Otherwise, for each $i \in [n]$ we have $s_i\in B(v)$ if $v_i=0$, and  $t_i\in B(v)$ if $v_i=1$. The orientation $\orientation\colon V(Q_n) \rightarrow \{+, -\}^n$ is then computed using the fundamental circuit $C:=C(B(v),q)$:

\begin{align}\label{eq:orientation}
\orientation(v)_i := \begin{cases}
\begin{rcases}
- & \makebox[0pt][l]{if $B(v)$ is not a basis,}\phantom{aaaaaaaaaaaaaaaaaaaaaa}  \\
\end{rcases} & \text{Case (1): handle missing basis.}  \\
 \begin{rcases}
- & \makebox[0pt][l]{if $v_i=0$ and $\fundamentalCircuit_{s_i} = 0$,}\phantom{aaaaaaaaaaaaaaaaaaaaaa} \\
+ & \text{if } v_i=1 \text{ and }\fundamentalCircuit_{t_i} = 0 , \\
\end{rcases}
&\text{Case (2): orient degenerate half-edge.} \\
\begin{rcases}
+ & \text{if } v_i=0 \text{ and } \fundamentalCircuit_{s_i} = - \\
  & \makebox[0pt][l]{or $v_i=1$ and $\fundamentalCircuit_{t_i} = - $,}\phantom{aaaaaaaaaaaaaaaaaaaaaa}\\
- & \text{if } v_i=0 \text{ and } \fundamentalCircuit_{s_i} = + \\
  & \text{or } v_i=1 \text{ and }\fundamentalCircuit_{t_i} = + .\\
\end{rcases} & \text{Case (3): orient a non-degenerate half-edge.}
\end{cases}
\end{align}

\begin{theorem} \label{thm:ReductionIsPromisePreserving}
The construction above is a polynomial-time reduction from \totalPOMCP to \totalUSO.
\end{theorem}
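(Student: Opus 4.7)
The plan is to establish two things: that the construction is polynomial-time, and that every solution of the constructed \totalUSO instance can be mapped in polynomial time back to a solution of the \totalPOMCP instance. Polynomial-time is essentially immediate: evaluating $O(v)$ requires a single query of the fundamental-circuit oracle plus $\mathcal{O}(n)$ bookkeeping to carry out the case distinction in \eqref{eq:orientation}, so the Boolean circuit for $O$ is obtained by composing the given oracle circuit with a fixed small circuit implementing \eqref{eq:orientation}.

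The bulk of the work is the mapping of solutions. For a solution of type \USOSolTypeEndOfLine, i.e.\ a sink $v$, I would split on whether $B(v)$ is a basis. If it is not, then $B(v)$ itself is a complementary set of size $n$ that is not a basis, which is a violation of type \MatroidVioNoBasis. Otherwise, looking at $C := C(B(v),q)$ dimension by dimension: for $v_i=0$, only Case~(2) with $C_{s_i}=0$ or Case~(3) with $C_{s_i}=+$ give $O(v)_i=-$, and for $v_i=1$, only Case~(3) with $C_{t_i}=+$ does, because Case~(2) on this side yields $+$. Since exactly one of each complementary pair lies in $B(v)$, the other is outside $B(v)\cup\{q\}$ and hence has $C$-entry $0$. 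Thus $C_q=+$, $C^-=\emptyset$, and $C_{s_i}C_{t_i}=0$ for all $i$, so $C$ is a solution of type \MatroidSolTypeEndOfLine.

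For a solution of type \USOVioCondition, i.e.\ distinct vertices $v,w$ with $O(v)_i=O(w)_i$ for every dimension $i$ where $v_i\neq w_i$, the non-basis case again yields an \MatroidVioNoBasis violation directly. Otherwise I claim $X := C(B(v),q)$ and $Y := C(B(w),q)$ form a \MatroidVioPMatroidImplicit violation, and \Cref{lem:VioImpliesNotPMatroid} then certifies non-P-matroid\-ness. Complementarity of $X,Y$ and $X_q=Y_q=+$ are immediate from the definition of fundamental circuits. The condition at coordinates $i$ with $v_i=w_i$ is automatic: say $v_i=w_i=0$; then $t_i$ lies in neither basis, so $X_{t_i}=Y_{t_i}=0$, giving $X_{s_i}Y_{t_i}=X_{t_i}Y_{s_i}=0$. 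For $v_i\neq w_i$, say $v_i=0$ and $w_i=1$, one has $X_{t_i}=Y_{s_i}=0$ automatically, and a four-line case analysis against \eqref{eq:orientation} shows that $O(v)_i=O(w)_i=-$ forces $X_{s_i}\in\{0,+\}$ and $Y_{t_i}=+$, while $O(v)_i=O(w)_i=+$ forces $X_{s_i}=-$ and $Y_{t_i}\in\{0,-\}$; in each case either $X_{s_i}Y_{t_i}=0$ or $X_{s_i}=Y_{t_i}$ holds. Distinctness $X\neq Y$ follows by contradiction: if $X=Y$, pick any $i$ with $v_i\neq w_i$ (WLOG $v_i=0,w_i=1$); then $X_{s_i}=Y_{s_i}=0$ and $X_{t_i}=Y_{t_i}=0$, whence Case~(2) forces $O(v)_i=-$ but $O(w)_i=+$, contradicting the \SWC violation.

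The main obstacle is precisely the last case: matching the three cases of \eqref{eq:orientation} against the four sign patterns allowed in \MatroidVioPMatroidImplicit, and separately handling distinctness. Once this case analysis is laid out cleanly, \Cref{lem:VioImpliesNotPMatroid} does the remaining work of interpreting the output as a legitimate \totalPOMCP violation, and the whole mapping is clearly computable in polynomial time given the oracle.
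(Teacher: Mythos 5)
Your proof is correct and follows the same structure as the paper's: the polynomial-time argument via circuit composition, the mapping of a sink $v$ to either \MatroidVioNoBasis or the solution $C(B(v),q)$, and the mapping of a \SWC violation $(v,w)$ to either \MatroidVioNoBasis or a \MatroidVioPMatroidImplicit pair $(C(B(v),q),C(B(w),q))$ all match the paper. Your distinctness argument for $X\neq Y$ is slightly more explicit than the paper's (which just cites \Cref{lem:completable}), but it is the same underlying observation that fully degenerate spanned dimensions are oriented downward by Case~(2), so $v$ and $w$ could not have been a \SWC violation.
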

\begin{proof}
Given a \totalPOMCP instance $\ExtendedMatroid = (\ExtendedGroundSet, \ExtendedCircuits)$, let $(Q_n, O)$ be an \totalUSO instance with \orientation as defined above.

\paragraph*{Polynomial time}
For the reduction we build an orientation oracle $O$ for \totalUSO from the given circuit oracle for \totalPOMCP. Note that this does not mean that we have to compute the output of $O$ for every vertex, we simply have to build the circuit computing $O$ from the circuit computing the oriented matroid circuit oracle.

Since $O$ merely computes $B(v)$ from a given vertex, invokes the circuit oracle, and then performs a case distinction, it can clearly be built and queried in time polynomial in $n$.

\paragraph*{Correctness}
To prove correctness of this reduction, we must show that 
every valid solution or violation solution of \totalUSO can be mapped back to a valid solution or  violation solution of \totalPOMCP in polynomial time. Since our reduction always creates a valid USO if \ExtendedMatroid is a P-matroid extension, we will map back violation solutions of \totalUSO only to violation solutions of \totalPOMCP.

\paragraph*{A solution of type \USOSolTypeEndOfLine}

Let $\nodeA \in V(Q_n)$ be a solution to the \totalUSO instance, i.e., a sink. 

It might be that $v$ is a sink because $B(v)$ is not a basis, and thus $O(v)_i = -$ for all $i$. To map this back to a violation or solution of \totalPOMCP, we first check if the \totalPOMCP oracle returns that $B(v)$ is not a basis for the input $\fundamentalCircuit(B(v), q)$. If so, we found a violation of type \MatroidVioNoBasis.
Otherwise, the fundamental circuit $\fundamentalCircuit(B(v), q)$ is a solution to the \totalPOMCP instance: Since \nodeA is a sink, there is no index at which the fundamental circuit is negative. All entries of $\fundamentalCircuit(B(v), q)$ are positive and the complementarity condition is fulfilled by construction of $B(v)$. 

\paragraph*{A violation of type \USOVioCondition}

If a violation is found, we have two distinct vertices \nodeA and \nodeB such that for all
$i\in [n]$, either $\nodeA_i = \nodeB_i$ or $O(\nodeA)_i = O(\nodeB)_i$. We first again check whether $B(v)$ and $B(w)$ are bases, if not we map this violation to a violation of type \MatroidVioNoBasis.

Otherwise, we show that there are two distinct complementary circuits 
$\CircuitA, \CircuitB \in \Circuits$ with $\CircuitA_q = \CircuitB_q = +$ and $X_{s_i}Y_{t_i}=X_{t_i}Y_{s_i}=0$ or $X_{s_i}=Y_{t_i}$ and $X_{t_i}=Y_{s_i}$ for all $i$, i.e., a violation of type \MatroidVioPMatroidImplicit.
We claim that the circuits
$\CircuitA := \fundamentalCircuit(\Basis(\nodeA), q)$ and $\CircuitB :=\fundamentalCircuit(\Basis(\nodeB), q)$ fulfill these conditions.

First, we need to show that $\CircuitA\neq \CircuitB$. If the two circuits were equal, they would have to be degenerate on all dimensions spanned by $v$ and $w$. Then by construction of $O$, $v$ and $w$ could not fail the \szabo{}-Welzl condition (see \Cref{lem:completable}).

Next, we observe that by definition we have $\fundamentalCircuit(\Basis(\nodeA), q)_q = +$ and $\fundamentalCircuit(\Basis(\nodeB), q)_q=+$ and both circuits are complementary.

Finally, we show that for each dimension $i$, either (i) $X_{s_i}Y_{t_i}=X_{t_i}Y_{s_i}=0$ or (ii) $X_{s_i}=Y_{t_i}$ and $X_{t_i}=Y_{s_i}$. For every dimension $i$ for which $v_i=w_i$ (w.l.o.g. both are $0$), both $\CircuitA_{t_i}=0$ and $\CircuitB_{t_i}=0$. Therefore, condition (i) holds. For a dimension $i$ for which $v_i\neq w_i$, if at least one of the $i$-edges incident to $v_i$ and $w_i$ is degenerate, we have $X_{s_i}=X_{t_i}=0$ (or $Y_{s_i}=Y_{t_i}=0$). Thus, condition (i) also holds in this case.
For a dimension $i$ in which both are non-degenerate, since $v$ and $w$ are a violation of type \USOVioCondition, $O(v)_i=O(w)_i$. By construction of $O$ it must hold that $X_{s_i}=Y_{t_i}$ and $X_{t_i}=Y_{s_i}$, i.e., condition (ii) holds.

Therefore, the circuits $\fundamentalCircuit(\Basis(\nodeA), q)$ and $\fundamentalCircuit(\Basis(\nodeB), q)$ form a violation of type \MatroidVioPMatroidImplicit.
\end{proof}

Since \totalUSO is in the complexity class $\UEOPL\subseteq \Class{PPAD} \cap \Class{PLS}\subseteq \Class{TFNP}$, we now also get the following corollary.
\begin{corollary}
\totalPOMCP is in \UEOPL.
\end{corollary}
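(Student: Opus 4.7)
The plan is to obtain the corollary as an immediate consequence of \Cref{thm:ReductionIsPromisePreserving} combined with the known containment $\totalUSO \in \UEOPL$ established in prior work. Since \UEOPL is closed under polynomial-time many-one reductions in the \TFNP sense — i.e., reductions that map instances in polynomial time and additionally map any solution of the target instance back to a solution of the source instance in polynomial time — it suffices to verify that our reduction satisfies both components.

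First, I would point to the forward direction of \Cref{thm:ReductionIsPromisePreserving}: given a \totalPOMCP instance $\ExtendedMatroid$, the Boolean circuit computing the orientation $O$ is constructed from the circuit oracle for $\ExtendedMatroid$ in polynomial time, since it merely forms $B(v)$, calls the matroid circuit oracle, and applies the case distinction in equation~\eqref{eq:orientation}. Next, I would point to the correctness part of the same theorem, which explicitly describes how to convert every \totalUSO solution into a \totalPOMCP solution in polynomial time: a sink (type \USOSolTypeEndOfLine) maps to either a type-\MatroidSolTypeEndOfLine solution or a type-\MatroidVioNoBasis violation, and a \szabo{}-Welzl violation (type \USOVioCondition) maps to either a type-\MatroidVioNoBasis violation or, via the pair of fundamental circuits $C(B(v),q)$ and $C(B(w),q)$, to a type-\MatroidVioPMatroidImplicit violation. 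All of these checks are polynomial.

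Invoking the known result that $\totalUSO \in \UEOPL$ (established via the \PLCP reduction in~\cite{UEOPL2020}), the existence of this \TFNP reduction yields $\totalPOMCP \in \UEOPL$. There is no real obstacle here; the only point worth emphasizing in the write-up is that the reduction of \Cref{thm:ReductionIsPromisePreserving} is precisely the kind of reduction under which the \TFNP subclass \UEOPL is closed, namely one that provides both a polynomial-time instance transformation and a polynomial-time backward translation of solutions.
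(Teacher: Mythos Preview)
Your proposal is correct and follows essentially the same approach as the paper, which derives the corollary in a single sentence from \Cref{thm:ReductionIsPromisePreserving} together with the known containment $\totalUSO\in\UEOPL$. One minor remark: the containment of \totalUSO in \UEOPL in~\cite{UEOPL2020} is established directly rather than ``via the \PLCP reduction,'' so you may want to adjust that parenthetical.
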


Note that neither of the two problems are known to be \UEOPL-hard.

\newpage
\bibliography{USO.bib,UEOPL.bib,additional_lit.bib}

\begin{thebibliography}{10}

\bibitem{bjoerner1999orientedmatroids}
Anders Björner, Michel Las~Vergnas, Bernd Sturmfels, Neil White, and Gunter~M.
  Ziegler.
\newblock {\em Oriented Matroids}.
\newblock Encyclopedia of Mathematics and its Applications. Cambridge
  University Press, 2nd edition, 1999.
\newblock \href {https://doi.org/10.1017/CBO9780511586507}
  {\path{doi:10.1017/CBO9780511586507}}.

\bibitem{borzechowski2024twoisenough}
Michaela Borzechowski, John Fearnley, Spencer Gordon, Rahul Savani, Patrick
  Schnider, and Simon Weber.
\newblock {Two Choices Are Enough for P-LCPs, USOs, and Colorful Tangents}.
\newblock In {\em 51st International Colloquium on Automata, Languages, and
  Programming (ICALP'24)}, volume 297 of {\em LIPIcs}, pages 32:1--32:18,
  Dagstuhl, Germany, 2024. Schloss Dagstuhl -- Leibniz-Zentrum f{\"u}r
  Informatik.
\newblock \href {https://doi.org/10.4230/LIPIcs.ICALP.2024.32}
  {\path{doi:10.4230/LIPIcs.ICALP.2024.32}}.

\bibitem{borzechowski2023phases}
Michaela Borzechowski and Simon Weber.
\newblock On phases of unique sink orientations, 2023.
\newblock \href {https://doi.org/10.48550/ARXIV.2310.00064}
  {\path{doi:10.48550/ARXIV.2310.00064}}.

\bibitem{LCPBuch}
Richard~W. Cottle, Jong-Shi Pang, and Richard~E. Stone.
\newblock {\em The Linear Complementarity Problem}.
\newblock Classics in Applied Mathematics. Society for Industrial and Applied
  Mathematics, 1992.
\newblock \href {https://doi.org/10.1137/1.9780898719000}
  {\path{doi:10.1137/1.9780898719000}}.

\bibitem{P-MatrixIsCoNP-complete}
Gregory~E. Coxson.
\newblock The {P}-matrix problem is {co-NP}-complete.
\newblock {\em Mathematical programming}, 64(1-3):173--178, 1994.
\newblock \href {https://doi.org/10.1007/BF01582570}
  {\path{doi:10.1007/BF01582570}}.

\bibitem{UEOPL2020}
John Fearnley, Spencer Gordon, Ruta Mehta, and Rahul Savani.
\newblock Unique end of potential line.
\newblock {\em Journal of Computer and System Sciences}, 114:1 -- 35, 2020.
\newblock \href {https://doi.org/10.1016/j.jcss.2020.05.007}
  {\path{doi:10.1016/j.jcss.2020.05.007}}.

\bibitem{foniok2009pivoting}
Jan Foniok, Komei Fukuda, Bernd G{\"a}rtner, and Hans-Jakob L{\"u}thi.
\newblock Pivoting in linear complementarity: Two polynomial-time cases.
\newblock {\em Discrete \& Computational Geometry}, 42(2):187--205, 2009.
\newblock \href {https://doi.org/10.1007/s00454-009-9182-2}
  {\path{doi:10.1007/s00454-009-9182-2}}.

\bibitem{foniok2014counting}
Jan Foniok, Bernd G\"{a}rtner, Lorenz Klaus, and Markus Sprecher.
\newblock Counting unique-sink orientations.
\newblock {\em Discrete Applied Mathematics}, 163:155–164, 2014.
\newblock \href {https://doi.org/10.1016/j.dam.2013.07.017}
  {\path{doi:10.1016/j.dam.2013.07.017}}.

\bibitem{fukuda2013subclass}
Komei Fukuda, Lorenz Klaus, and Hiroyuki Miyata.
\newblock A new subclass of {P}-matrix linear complementarity problems.
\newblock {\em IEICE Technical Report}, 113(50):25--32, 2013.
\newblock URL: \url{https://ken.ieice.org/ken/paper/20130517HBd7/}.

\bibitem{gao2020dcubes}
Yuan Gao, Bernd G{\"a}rtner, and Jourdain Lamperski.
\newblock A new combinatorial property of geometric unique sink orientations,
  2020.
\newblock \href {https://doi.org/10.48550/ARXIV.2008.08992}
  {\path{doi:10.48550/ARXIV.2008.08992}}.

\bibitem{gaertner2008grids}
Bernd G{\"a}rtner, Walter~D. Morris~jr., and Leo R{\"u}st.
\newblock Unique sink orientations of grids.
\newblock {\em Algorithmica}, 51(2):200--235, 2008.
\newblock \href {https://doi.org/10.1007/s00453-007-9090-x}
  {\path{doi:10.1007/s00453-007-9090-x}}.

\bibitem{gaertner2005stochasticgames}
Bernd G{\"a}rtner and Leo R{\"u}st.
\newblock Simple stochastic games and {P}-matrix generalized linear
  complementarity problems.
\newblock In Maciej Li{\'{s}}kiewicz and R{\"u}diger Reischuk, editors, {\em
  Fundamentals of Computation Theory}, pages 209--220. Springer Berlin
  Heidelberg, 2005.
\newblock \href {https://doi.org/10.1007/11537311_19}
  {\path{doi:10.1007/11537311_19}}.

\bibitem{gaertner2006lpuso}
Bernd G{\"a}rtner and Ingo Schurr.
\newblock Linear programming and unique sink orientations.
\newblock In {\em Proc. 17th Annual ACM-SIAM Symposium on Discrete Algorithms
  (SODA)}, pages 749--757, 2006.
\newblock \href {https://doi.org/10.5555/1109557.1109639}
  {\path{doi:10.5555/1109557.1109639}}.

\bibitem{gaertner2015recognizing}
Bernd G{\"a}rtner and Antonis Thomas.
\newblock The complexity of recognizing unique sink orientations.
\newblock In Ernst~W. Mayr and Nicolas Ollinger, editors, {\em 32nd
  International Symposium on Theoretical Aspects of Computer Science (STACS
  2015)}, volume~30 of {\em Leibniz International Proceedings in Informatics
  (LIPIcs)}, pages 341--353, Dagstuhl, Germany, 2015. Schloss Dagstuhl --
  Leibniz-Zentrum für Informatik.
\newblock \href {https://doi.org/10.4230/LIPIcs.STACS.2015.341}
  {\path{doi:10.4230/LIPIcs.STACS.2015.341}}.

\bibitem{grelier2022minimumconvexpartition}
Nicolas Grelier.
\newblock {Hardness and Approximation of Minimum Convex Partition}.
\newblock In Xavier Goaoc and Michael Kerber, editors, {\em 38th International
  Symposium on Computational Geometry (SoCG 2022)}, volume 224 of {\em Leibniz
  International Proceedings in Informatics (LIPIcs)}, pages 45:1--45:15,
  Dagstuhl, Germany, 2022. Schloss Dagstuhl -- Leibniz-Zentrum f{\"u}r
  Informatik.
\newblock \href {https://doi.org/10.4230/LIPIcs.SoCG.2022.45}
  {\path{doi:10.4230/LIPIcs.SoCG.2022.45}}.

\bibitem{Gelb}
David~S. Johnson, Christos~H. Papadimitriou, and Mihalis Yannakakis.
\newblock How easy is local search?
\newblock {\em Journal of Computer and System Sciences}, 37(1):79--100, 1988.
\newblock \href {https://doi.org/10.1016/0022-0000(88)90046-3}
  {\path{doi:10.1016/0022-0000(88)90046-3}}.

\bibitem{klaus2012phd}
Lorenz Klaus.
\newblock {\em A fresh look at the complexity of pivoting in linear
  complementarity}.
\newblock PhD thesis, ETH Z{\"u}rich, 2012.
\newblock \href {https://doi.org/10.3929/ethz-a-007604201}
  {\path{doi:10.3929/ethz-a-007604201}}.

\bibitem{klaus2015enumerationplcp}
Lorenz Klaus and Hiroyuki Miyata.
\newblock Enumeration of {PLCP}-orientations of the 4-cube.
\newblock {\em European Journal of Combinatorics}, 50:138--151, 2015.
\newblock Combinatorial Geometries: Matroids, Oriented Matroids and
  Applications. Special Issue in Memory of Michel Las Vergnas.
\newblock \href {https://doi.org/10.1016/j.ejc.2015.03.010}
  {\path{doi:10.1016/j.ejc.2015.03.010}}.

\bibitem{lasvergnas1978extensions}
Michel Las~Vergnas.
\newblock Extensions ponctuelles d'une géométrie combinatoire orienté.
\newblock In {\em Problèmes Combinatoires et Théorie des Graphes}, volume
  260, pages 265--270, 1978.

\bibitem{megiddo1991tfnp}
Nimrod Megiddo and Christos~H. Papadimitriou.
\newblock On total functions, existence theorems and computational complexity.
\newblock {\em Theoretical Computer Science}, 81(2):317--324, 1991.
\newblock \href {https://doi.org/10.1016/0304-3975(91)90200-L}
  {\path{doi:10.1016/0304-3975(91)90200-L}}.

\bibitem{morris2002distinguishing}
Walter~D. Morris~jr.
\newblock Distinguishing cube orientations arising from linear programs, 2002.
\newblock URL: \url{https://math.gmu.edu/~wmorris/repcube.ps}.

\bibitem{morris2002pivotalgo}
Walter~D. Morris~jr.
\newblock Randomized pivot algorithms for {P}-matrix linear complementarity
  problems.
\newblock {\em Mathematical Programming}, 92(2):285--296, 2002.
\newblock \href {https://doi.org/10.1007/s101070100268}
  {\path{doi:10.1007/s101070100268}}.

\bibitem{PPAD}
Christos~H. Papadimitriou.
\newblock On the complexity of the parity argument and other inefficient proofs
  of existence.
\newblock {\em Journal of Computer and System Sciences}, 48(3):498 -- 532,
  1994.
\newblock \href {https://doi.org/10.1016/S0022-0000(05)80063-7}
  {\path{doi:10.1016/S0022-0000(05)80063-7}}.

\bibitem{ruest2007phd}
Leo R{\"u}st.
\newblock {\em The P-matrix linear complementarity problem: generalizations and
  specializations}.
\newblock PhD thesis, ETH Z{\"u}rich, 2007.
\newblock \href {https://doi.org/10.3929/ethz-a-005466758}
  {\path{doi:10.3929/ethz-a-005466758}}.

\bibitem{schurr2004quadraticbound}
Ingo Schurr and Tibor Szab{\'o}.
\newblock Finding the sink takes some time: An almost quadratic lower bound for
  finding the sink of unique sink oriented cubes.
\newblock {\em Discrete \& Computational Geometry}, 31(4):627--642, 2004.
\newblock \href {https://doi.org/10.1007/s00454-003-0813-8}
  {\path{doi:10.1007/s00454-003-0813-8}}.

\bibitem{stickney1978digraph}
Alan Stickney and Layne Watson.
\newblock Digraph models of {B}ard-type algorithms for the linear
  complementarity problem.
\newblock {\em Mathematics of Operations Research}, 3(4):322--333, 1978.
\newblock URL: \url{https://www.jstor.org/stable/3689630}.

\bibitem{szabo2001usos}
Tibor Szab{\'o} and Emo Welzl.
\newblock Unique sink orientations of cubes.
\newblock In {\em Proceedings of the 42nd IEEE Symposium on Foundations of
  Computer Science}, pages 547--555, 2001.
\newblock \href {https://doi.org/10.1109/SFCS.2001.959931}
  {\path{doi:10.1109/SFCS.2001.959931}}.

\bibitem{todd1984matroids}
Michael~J. Todd.
\newblock Complementarity in oriented matroids.
\newblock {\em SIAM Journal on Algebraic Discrete Methods}, 5(4):467--485,
  1984.
\newblock \href {https://doi.org/10.1137/0605046} {\path{doi:10.1137/0605046}}.

\bibitem{weber2021matousek}
Simon Weber and Bernd Gärtner.
\newblock A characterization of the realizable {Matou\v{s}ek} unique sink
  orientations, 2021.
\newblock \href {https://doi.org/10.48550/ARXIV.2109.03666}
  {\path{doi:10.48550/ARXIV.2109.03666}}.

\bibitem{weber2022matousekgap}
Simon Weber and Joel Widmer.
\newblock Realizability makes a difference: A complexity gap for sink-finding
  in {USOs}, 2022.
\newblock \href {https://doi.org/10.48550/ARXIV.2207.05985}
  {\path{doi:10.48550/ARXIV.2207.05985}}.

\end{thebibliography}
\newpage

\appendix

\section{Schurr and \szabo{}'s Lower Bound}\label{sec:SchurrSzaboNonHoltKlee}
On an intuitive level, Schurr and \szabo{}'s adversarial construction yielding the $\Omega(n^2/\log n)$ lower bound for deterministic sink-finding algorithms~\cite{schurr2004quadraticbound} works as follows:
In a first phase, the construction answers $n-\lceil\log_2 n\rceil$ queries of the algorithm. After these queries, it is guaranteed that there exists some face of dimension $n-\lceil\log_2 n\rceil$ in which no vertex has been queried yet. The queries in the first phase are answered such that this face is a hypersink (all edges are incoming) and can thus be filled in with any USO. The lower bound then follows from a recursive argument.

In more detail, in the first phase, the construction keeps a set of dimensions $L$ and a USO~$\tilde{s}$ of the cube spanned by the dimensions in $L$. Queried vertices are oriented according to their projected location in $\tilde{s}$ for the dimensions in $L$, and always outgoing for the dimensions in $[n]\setminus L$. As an invariant, no two queried vertices can be the same vertex when projected onto the cube spanned by $L$, thus a dimension is added to $L$ (and $\tilde{s}$ is adapted by a defined procedure) whenever this condition would be violated.

\begin{lemma}\label[lemma]{lem:schurrnonpmatroid}
There exists a deterministic sink-finding algorithm $\mathcal{A}$, against which Schurr and \szabo{}'s adversarial construction from the proof of \cite[Theorem~9]{schurr2004quadraticbound} produces a USO that is not a P-matroid USO.
\end{lemma}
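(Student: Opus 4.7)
The strategy is to identify a structural property satisfied by every P-matroid USO and to exhibit an algorithm $\mathcal{A}$ against which Schurr and \szabo{}'s adversarial construction produces a USO violating it. The natural candidate, as signalled by the appendix's label, is the \emph{Holt--Klee condition}: in every $d$-dimensional face, a Holt--Klee USO admits $d$ vertex-disjoint directed paths from its source to its sink. P-matroid USOs satisfy Holt--Klee, essentially because fundamental circuits and cocircuits in an oriented matroid are orthogonal (\Cref{lem:orthogonal}), and this orthogonality lifts through Klaus' reduction (\Cref{sec:ClassicReduction}) to produce the required vertex-disjoint monotone paths in every face of the induced USO. I would first state this implication, with a short argument or pointer to the oriented-matroid literature.

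Given this, I would trace \szabo{} and Schurr's construction on an explicit algorithm $\mathcal{A}$ in small dimension, ideally $n = 3$, which is the smallest dimension admitting a non-Holt--Klee USO. The construction's phase 1 orients every queried vertex outgoing on all dimensions outside its current set $L$; by choosing $\mathcal{A}$'s query sequence, I would force $L$ and the auxiliary USO $\tilde{s}$ to grow in a prescribed manner and drive the phase-2 recursion (which fills in the residual hypersink face) into producing a specific 3-dimensional orientation known to violate Holt--Klee, for instance a USO containing a ``bowtie'' of two vertex-disjoint directed source--sink paths with no third such path. Because P-matroid USOs are closed under face restriction --- an immediate consequence of oriented-matroid contractions and deletions of a P-matroid remaining P-matroids --- a single non-Holt--Klee face of the constructed USO suffices to conclude that the whole orientation is not a P-matroid USO.

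The main obstacle is the careful bookkeeping of Schurr and \szabo{}'s construction, which carries some internal freedom in how $L$ is extended and how $\tilde{s}$ is updated. I would sidestep this by fixing $\mathcal{A}$ to be a very short query sequence in a small fixed dimension, so that only finitely many adversarial traces are consistent with the construction's invariants, and then verify by direct case analysis that each of these traces produces a USO containing the targeted non-Holt--Klee face. This finite verification then certifies that the resulting USO is not a P-matroid USO, establishing the lemma and hence that Schurr and \szabo{}'s lower bound does not restrict to the class of P-matroid USOs.
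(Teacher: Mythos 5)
Your overall strategy matches the paper's: identify the Holt--Klee condition as the obstruction, force the construction to produce a small face violating it, and then invoke closure under face restriction. But there are two genuine gaps.

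First, you cannot work ``ideally in $n=3$.'' Schurr and \szabo{}'s phase 1 answers $n-\lceil\log_2 n\rceil$ queries; for $n=3$ that is a single query, far too few to constrain $L$ and $\tilde{s}$ to a prescribed $3$-dimensional orientation. The paper needs five queries to force the auxiliary USO $\tilde{s}$ on $L=\{1,2,3\}$, and hence sets the ambient dimension to at least $8$ (so that $n-\lceil\log_2 n\rceil\ge 5$). You have conflated the dimension of the targeted non-Holt--Klee face, which is indeed $3$, with the dimension of the cube on which the adversary is run. This distinction is essential: the bad $3$-dimensional face is the cube spanned by $L$, living inside a much larger ambient cube, and it is the phase-1 mechanics that pin $\tilde{s}$ down — not the phase-2 recursion, which only populates the residual hypersink.

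Second, and more serious, you assert that \emph{all} P-matroid USOs satisfy Holt--Klee ``essentially because fundamental circuits and cocircuits are orthogonal.'' This is not a known result; orthogonality of circuits and cocircuits does not obviously produce vertex-disjoint monotone paths, and the existing Holt--Klee theorem \cite{gaertner2008grids} is only for P-\emph{cubes} (the realizable case). The paper's proof deliberately sidesteps this: it works in a $3$-dimensional face, where every P-matroid USO is automatically a P-cube because oriented matroids on $7$ elements are realizable, and only then applies the P-cube Holt--Klee bound. If you insist on a general Holt--Klee claim for P-matroid USOs, you must prove it (and the argument you sketch does not suffice); if instead you use the paper's realizability trick, you should say so explicitly, since it is what actually closes the argument.

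Finally, a small wrinkle: Schurr and \szabo{}'s construction does have internal freedom (which dimension to add to $L$), and your plan to ``verify all finitely many consistent traces'' is sound but unnecessary — the paper handles this more cleanly by observing that the algorithm can \emph{detect} which dimension was added (the unique incoming edge of the just-queried vertex reveals it) and adapt its next query accordingly, so w.l.o.g.\ a single trace needs to be analyzed.
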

\begin{proof}
We describe an algorithm $\mathcal{A}$ that forces the construction to make $\tilde{s}$ a fixed $3$\nobreakdash-dim\-ensional subcube which is not a P-matroid USO. For this strategy to work, we require five queries. We set the dimension of the final cube to be at least $8$, such that the construction stays in the first phase for at least five queries. 

All vertices queried by our algorithm have a zero in all coordinates except the first three; we therefore omit writing these additional zeroes in their coordinates. The algorithm begins by querying the following two vertices:

\[v_1=000,\;v_2=111\]
After the second query, the construction has to add one of the first three dimensions to the set $L$, since otherwise $v_1$ and $v_2$ have the same coordinates within the (empty) set $L$. Note that the algorithm can detect this choice $\ell$, as the only incoming edge of $v_2$ is in this dimension $\ell$. W.l.o.g., we assume the algorithm picks $\ell=1$, i.e., $L=\{1\}$.

The algorithm continues by querying $v_3=011$. Once again, the construction has to pick either the second or third dimension to be added to $L$, as otherwise $v_1$ and $v_3$ have the same coordinates within the dimensions in $L$. Again this choice can be detected by the algorithm, and w.l.o.g. we assume that now $L=\{1,2\}$.

The algorithm now queries $v_4=100$, and $L$ does not change, since all vertices $v_1,\ldots,v_4$ have different coordinates in the dimensions $\{1,2\}$.

The final query is $v_5=001$, and now $L$ must be changed to $\{1,2,3\}$. The USO $\tilde{s}$ on the cube spanned by $L$ evolves with these queries as shown in \Cref{fig:schurrnonholtklee}. Note that at no point the construction has any choice in how to orient the edges in the newly added dimension, since all edges of dimensions not in $L$ incident to queried vertices must be oriented away from the queried vertex by definition of the construction. Thus, the construction is forced to build this orientation when confronted with our algorithm.

\begin{figure}[htb]
\centering
\includegraphics[keepaspectratio,width=0.86\columnwidth]{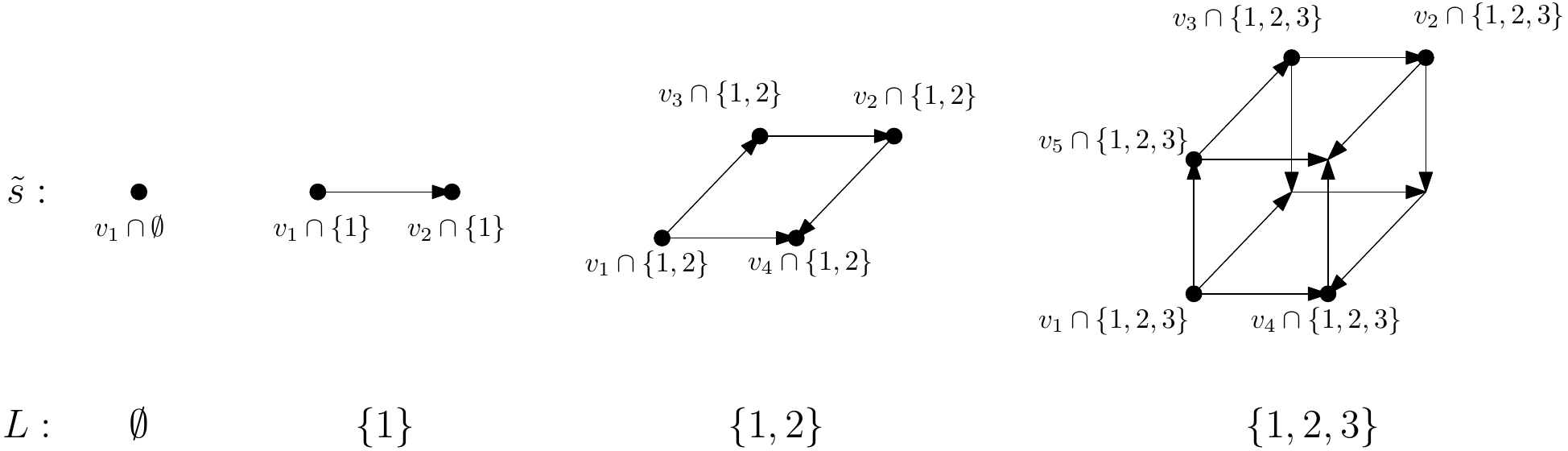}
\caption{The orientation $\tilde{s}$ built by Schurr and \szabo{}'s adversarial construction. The notation $v\cap L$ describes projection of $v$ onto the cube spanned by the dimensions in $L$.}\label{fig:schurrnonholtklee}
\end{figure}

By \cite{gaertner2008grids}, every $n$-dimensional P-cube must contain $n$ vertex-disjoint paths from the source to the sink (a property called \emph{Holt-Klee}). The orientation $\tilde{s}$ clearly does not fulfill this, as it does not have three edges from the lower $3$-facet (containing the source) to the upper $3$-facet (containing the sink). Thus, $\tilde{s}$ is not a P-cube. In $3$ dimensions, the set of P-cubes is the same as the set of P-matroid USOs~\cite{klaus2012phd} (this follows from every oriented matroid of $7$ elements being realizable). We thus also know that $\tilde{s}$ is not a P-matroid USO. As both P-cubes and P-matroid USOs are closed under the operation of taking subcubes~\cite{klaus2012phd}, and because the final orientation constructed by the construction of Schurr and \szabo{} contains $\tilde{s}$ as the subcube spanned by $v_1$ and $v_2$, it cannot be a P-cube or a P-matroid USO, either.
\end{proof}

\end{document}